\documentclass{article} 
\usepackage{iclr2024_conference,times}


\usepackage{amsmath,amsfonts,bm}









\def\eqref#1{equation~\ref{#1}}









\def\1{\bm{1}}








\def\vd{{\bm{d}}}

\def\vg{{\bm{g}}}

\def\vu{{\bm{u}}}
\def\vv{{\bm{v}}}
\def\vw{{\bm{w}}}
\def\vx{{\bm{x}}}
\def\vy{{\bm{y}}}



\DeclareMathAlphabet{\mathsfit}{\encodingdefault}{\sfdefault}{m}{sl}
\SetMathAlphabet{\mathsfit}{bold}{\encodingdefault}{\sfdefault}{bx}{n}




\def\sT{{\mathbb{T}}}








\newcommand{\E}{\mathbb{E}}



\DeclareMathOperator*{\argmin}{arg\,min}

\usepackage{hyperref}
\usepackage{url}

\usepackage{macro}

\title{Private Zeroth-Order Nonsmooth Nonconvex Optimization}


\author{Qinzi Zhang, Hoang Tran \& Ashok Cutkosky\\
Department of Electrical and Computer Engineering\\
Boston University\\
Boston, MA, USA\\
\texttt{\{qinziz,tranhp,cutkosky\}@bu.edu}
}

%

\newcommand{\x}{\vx}
\newcommand{\y}{\vy}
\newcommand{\w}{\vw}
\newcommand{\g}{\vg}
\renewcommand{\d}{\vd}
\renewcommand{\u}{\vu}
\renewcommand{\v}{\vv}

\newcommand{\conv}{\operatorname{conv}}
\newcommand{\vol}{\operatorname{vol}}
\newcommand{\regret}{\mathrm{Reg}}
\newcommand{\grad}{\textsc{Grad}}
\newcommand{\diff}{\textsc{Diff}}
\newcommand{\tree}{\textsc{Tree}}
\newcommand{\node}{\textsc{Node}}

\iclrfinalcopy 
\begin{document}

\maketitle

\begin{abstract}

We introduce a new zeroth-order algorithm for private stochastic optimization on nonconvex and nonsmooth objectives.
Given a dataset of size $M$, our algorithm ensures $(\alpha,\alpha\rho^2/2)$-R\'enyi differential privacy and finds a $(\delta,\epsilon)$-stationary point so long as $M=\tilde\Omega\left(\frac{d}{\delta\epsilon^3} + \frac{d^{3/2}}{\rho\delta\epsilon^2}\right)$.
This matches the optimal complexity of its non-private zeroth-order analog. 
Notably, although the objective is not smooth, we have privacy ``for free'' whenever $\rho \ge \sqrt{d}\epsilon$.
\end{abstract}

\section{Introduction}

We study the stochastic optimization problem of the form
\begin{equation*}
    \min_{\x\in\RR^d} F(\x) = \E_z[f(\x,z)],
\end{equation*}
where the stochastic function $f(\x,z)$ might be both non-convex and non-smooth with respect to $\x$. Our focus is on zeroth-order algorithms, often referred to as gradient-free algorithms. Unlike first-order algorithms that have access to the stochastic gradient $\nabla f(\x,z)$, zeroth-order algorithms can access only the function value $f(\x,z)$.

Non-convex stochastic optimization is fundamental to modern machine learning. For example, in deep learning, $\x$ is the weight of some neural network model, $z$ is a datapoint, and $f(\x,z)$ represents the loss of the model evaluated on $\x$ and $z$.
Consequently, training the machine learning model equates to minimizing the non-convex objective $F(\x)$.
Given its significant role, there has been a marked increase in research focusing on non-convex optimization in recent years \citep{ghadimi2013stochastic, arjevani2019lower, arjevani2020second, carmon2019lower, fang2018spider, cutkosky2019momentum}. 
Since finding a global minimum of non-convex $F(\x)$ is intractable, many previous works assume that $F$ is smooth, and their goal is to find an $\epsilon$-stationary point $\x$ where $\|\nabla F(\x)\| \le \epsilon$. 
However, objectives are not always smooth, especially with the ever-increasing complexity of modern machine learning models.
Making the problem even more difficult, 
recent research shows that 
finding an $\epsilon$-stationary point of a non-smooth objective might be intractable \citep{zhang2020complexity, kornowski2022oracle}.

To address this issue, \cite{zhang2020complexity} introduces an alternative objective for the convergence analysis of non-smooth non-convex optimization. Specifically, they propose the notion of Goldstein stationary point \citep{goldstein1977optimization}: $\x$ is said to be an $(\delta,\epsilon)$-stationary point of $F$ if there exists a subset $S$ in the ball of radius $\delta$ centered at $\x$ such that $\left\|\E\left[\nabla F(\y)\right]\right\| \le \epsilon$ where $\y$ is uniformly distributed over $S$.
Recently, there have been many works on first-order algorithms using this framework \citep{zhang2020complexity, tian2022finite, cutkosky2023optimal}. Notably, \cite{cutkosky2023optimal} designs an ``Online-to-non-convex Conversion'' algorithm that finds a $(\delta,\epsilon)$-stationary point with $O(\delta^{-1}\epsilon^{-3})$ samples, which is proven to be the optimal rate.
However, first-order algorithms have their own limitations. 
For many applications, computing first-order gradients can be computationally expensive or even impossible \citep{nelson2010optimization,mania2018simple}.
Hence, there are many recent results focusing on designing efficient zeroth-order algorithms for non-smooth non-convex optimization \citep{lin2022gradientfree, chen2023faster, kornowski2023algorithm}. Recently, \cite{kornowski2023algorithm} proposes a zeroth-order algorithm that requires only $O(d\delta^{-1}\epsilon^{-3})$ samples, which is the current state-of-the-art rate for zeroth-order optimization on non-smooth non-convex objectives. The additional dimension dependence is an inevitable cost of zeroth-order algorithms even when the objective is smooth or convex \citep{duchi2015optimal}.

In this paper, we study zeroth-order stochastic optimization on non-convex and non-smooth objectives. Zeroth-order optimization is particularly important in cases where memory is constrained or the model is excessively large so that computing a backwards pass is forbiddingly expensive.
In addition, we also aim to protect the privacy of user's data. For example, consider the practical problem of training a recommendation model or a chat response model. At each step the algorithm is given a set of different users who report the model's performance in the form of ``upvotes'' or ``downvotes''. It's important to make updates using such zeroth-order feedback while preserving the privacy of feedback from the individual users.

To this end, we require our algorithm to be \textit{differentially private} (DP) \citep{dwork2006calibrating}, which means with high probability, the output of our algorithm operating on any particular dataset is \textit{almost} indistinguishable from the output when one datapoint in the dataset is perturbed.

The primary objective of private stochastic optimization is to minimize the dataset size required to solve the optimization problem while maintaining differential privacy guarantees. 
This area has seen extensive research efforts.
While problems with convex objectives have been well-studied 
over the past decade 
\citep{chaudhuri2011differentially, jain2012differentially, kifer2012private, bassily2014private, talwar2014private, jain2014near, bassily2019private, feldman2020private, bassily2021non, zhang2022differentially}, more recent efforts have focused on private non-convex optimization
\citep{wang2017differentially, wang2019differentially, tran2022momentum, wang2019efficient, zhou2020private, bassily2021differentially, arora2023faster}. 
Recently, \cite{arora2023faster} designs an algorithm that, assuming the objective is smooth, finds an $\epsilon$-stationary point and satisfies $(\rho,\gamma)$-DP with $\tilde O(\epsilon^{-3} + \sqrt{d}\rho^{-1}\epsilon^{-2})$ data complexity.
While private non-convex optimization has seen rapid advancements, certain challenges remain. Many previous studies hinge on the assumption of smooth objectives since their methods are essentially extensions of non-private non-convex optimization algorithms. 
In fact, this limitation emerges even in situations where the objective is convex, in which case smoothness provides a critical tool for reducing sensitivity.
Since the results in non-smooth non-convex optimization are recent, its differentially private counterpart remains largely unexplored.

\subsection{Our Contributions}

The main contribution of this paper is the introduction of a zeroth-order algorithm for differentially private stochastic optimization on non-convex and non-smooth objectives. 
To our knowledge, this is the first work under this framework.
Our algorithm satisfies $(\alpha,\alpha\rho^2/2)$-R\'enyi differential privacy (RDP) \citep{mironov2017renyi} (which is approximately $(\rho,\gamma)$-DP) 
and finds a $(\delta,\epsilon)$-stationary point with 
$\tilde O(d\delta^{-1}\epsilon^{-3} + d^{3/2}\rho^{-1}\delta^{-1}\epsilon^{-2})$
data complexity. 
Notably, the non-private term $O(d\delta^{-1}\epsilon^{-3})$ matches the state-of-the-art complexity found in its non-private counterpart. Moreover, when $\rho \ge \sqrt{d}\epsilon$, the non-private term is dominating, suggesting that privacy can be attained without additional cost. This is consistent with the observations when the objective is smooth \citep{arora2023faster}.


Our algorithm incorporates four essential components, each crucial for achieving the optimal rate. We leverage the non-private Online-to-non-convex Conversion (O2NC) framework by \cite{cutkosky2023optimal}, which finds a $(\delta,\epsilon)$-stationary point of a non-smooth objective using a first-order oracle. We then build an approximate first-order oracle (i.e. a gradient estimator) with a zeroth-order oracle. Although this high-level strategy is a common technique, our approach distinguishes itself through its gradient oracle design. 
In contrast to prior non-private approaches that directly approximate the gradient with a standard zeroth-order estimator \citep{kornowski2023algorithm}, we introduce a variance-reduced gradient oracle. Our approach incorporates two zeroth-order estimators: one for the gradient and another for its difference between two points. Our gradient oracle also differs subtly from the standard zeroth-order estimator by sampling $d$ i.i.d. estimators for each data point, which is required to achieve the optimal dimension dependence. Both estimators exhibit reduced sensitivity. We reduce the privacy cost of our variance-reduced estimators by using the ``tree mechanism'' \citep{dwork2010differential, chan2011private}, yielding a new state-of-the-art privacy guarantee.

Omitting any component from our algorithm significantly impacts the results. Neither utilizing the O2NC technique nor sampling $d$ i.i.d. estimators results in an additional dimension dependence of $\sqrt{d}$. 
Moreover, the combination of the variance-reduced oracle and the tree mechanism is crucial for optimal privacy. A naive approach might directly make O2NC private adding noise to classical zeroth-order gradient estimators, but this yields a sub-optimal rate with an extra factor of $\epsilon^{-1}$.
More details about the intuitions and challenges of our algorithm can be found in Section \ref{sec:algorithm}.
\section{Preliminary}
\label{sec:pre}

\paragraph{Notation}

We use bold font $\x$ to denote a vector $\x\in\RR^d$, and denote the Euclidean norm of $\x$ by $\|\x\|$. We denote the unit ball and unit sphere in $\RR^d$ by $\BB$ and $\SS$, and denote the uniform distribution on $\BB$ and $\SS$ by $\calU_\BB$ and $\calU_\SS$ respectively. We denote an open ball in $\RR^d$ centered at $\x$ of radius $r$ by $B(\x,r)=\{\y:\|\x-\y\|<r\}$. For $n\in\NN$, we denote the set $\{1,\ldots,n\}$ by $[n]$, and we denote a sequence $a_1,\ldots,a_n$ by $a_{1:n}$. We use the standard big-O notation, and use $\tilde O$ to hide additional logarithmic factors. We interchangeably use $f(x)\lesssim g(x)$ to denote $f(x) = \tilde O(g(x))$.

\paragraph{Non-smooth Optimization}

A function $h:\RR^d\to\RR$ is $L$-Lipschitz if $|h(\x)-h(\y)| \le L\|\x-\y\|$ for all $\x,\y\in\RR^d$; $h$ is $H$-smooth if it is differentiable and $\|\nabla h(\x) - \nabla h(\y)\| \le H\|\x-\y\|$ for all $\x,\y\in\RR^d$. A point $\x^*\in\RR^d$ is an $\epsilon$-stationary point of $h$ if $\|\nabla h(\x^*)\| \le \epsilon$. 

\begin{definition}
\label{def:stationary}
Let $\delta>0$ and $h:\RR^d\to\RR$ be differentiable. The Goldstein $\delta$-subdifferential of $h$ at $\x$ is $\partial_\delta h(\x) = \conv(\cup_{y\in B(\x,\delta)} \nabla h(\y))$. We define $\|\nabla h(\x)\|_\delta = \inf \{\|\g\|: \g\in\partial_\delta h(\x)\}$. 
Then $\x^*$ is said to be a Goldstein $(\delta,\epsilon)$-stationary point of $h$ if $\|\nabla h(\x^*)\|_\delta \le \epsilon$. Note that
\begin{equation*}
    \textstyle
    \|\nabla h(\x)\|_\delta \le 
    \inf_{S\subset B(\x,\delta)} \left\| \frac{1}{|S|} \sum_{\y\in S} \nabla h(\y) \right\|.
\end{equation*}
\end{definition}

\paragraph{Differential Privacy}

A stochastic optimization algorithm can be considered as a randomized algorithm that takes a dataset $Z$ (a collection of data points $z_1,\ldots,z_M$) and outputs an output $\overline \w$. Throughout this paper, we denote $\calZ$ as the set of all possible datasets of size $M$.
Two datasets $Z,Z'\in\calZ$ are neighboring if they differ only in one data point. A randomized algorithm $\calA:\calZ\to\calR$ is said to be $(\epsilon,\delta)$-differentially private ($(\epsilon,\delta)$-DP) for $\epsilon,\delta>0$ if for all neighboring datasets $Z,Z'$ and measurable $E\subseteq \calR$, we have $\P\{\calA(Z) \in E\} \le e^\epsilon\P\{\calA(Z') \in E\} + \delta$ \citep{dwork2006calibrating}. 

Another common privacy measure is R\'enyi differential privacy (RDP). Algorithm $\calA$ is $(\alpha,\rho)$-RDP for $\alpha>1,\rho>0$ if for all neighboring $Z,Z'$, $D_\alpha(\calA(Z)\|\calA(Z')) \le \rho$ \citep{mironov2017renyi}, where $D_\alpha(\mu\|\nu)$ is the R\'enyi divergence of distributions $\mu,\nu$. 
RDP can be converted to $(\epsilon,\delta)$-DP as follows: if an algorithm is $(\alpha,\alpha\rho^2/2)$-RDP for all $\alpha>1$, then it is also $(2\rho\ln(1/\delta)^{1/2}, \delta)$-DP for all $\delta\ge \exp(-\rho^2)$ \citep[Proposition 3]{mironov2017renyi}. Therefore, we use $(\alpha,\alpha\rho^2/2)$-RDP as a measure of differential privacy in our paper.

If $\|\calA(Z)-\calA(Z')\| \le s$ for any neighboring $Z,Z'$, we say the sensitivity of $\calA$ is bounded by $s$. It is well known that in this case, adding a Gaussian noise $\calN(0,\sigma^2I)$ to the output of $\calA$, where $\sigma=s/\rho$, ensures that $\calA$ is $(\alpha,\alpha\rho^2/2)$-RDP \citep{mironov2017renyi}.

We also make use of the ``tree mechanism'' \cite{dwork2010differential, chan2011private}, which is a technique that allows for private release of \emph{running sums} of potentially sensitive data (Algorithm~\ref{alg:tree}).

\paragraph{Online Learning}

Our algorithm builds on the Online-to-non-convex algorithm by \cite{cutkosky2023optimal}, so we briefly introduce the setting of online convex optimization (OCO) \citep{cesa2006prediction, hazan2019introduction, orabona2019modern}. An OCO algorithm proceeds in rounds. In each round $t$, it outputs $\x_t$, receives a convex loss function $\ell_t(\x)$, and suffers loss $\ell_t(\x_t)$. It is common to use a linear loss $\ell_t(\x) = \langle \v_t,\x \rangle$. An OCO algorithm has domain bounded by $D$ if $\|\x_t\|\le D$ for all $t$.

The goal of online learning is to minimize the \textit{static regret} defined as
\begin{equation*}
    \textstyle
    \regret_T(\u) = \sum_{t=1}^T \ell_t(\x_t) - \ell_t(\u) = \sum_{t=1}^T \langle \v_t,\x_t-\u \rangle,
\end{equation*}
where $\u$ is some comparator vector (often, $\u=\argmin \sum_{t=1}^T \ell_t(\x)$). 
There are many OCO algorithms that achieve the minimax optimal $O(\sqrt{T})$ regret. For example, projected Online Subgradient Descent (OSD) \citep{zinkevich2003online} with domain bounded by $D$ satisfies
\begin{equation*}
    \textstyle
    \E[\regret_T(\x)] \le D\sqrt{\sum_{t=1}^T \E\|\v_t\|^2}.
\end{equation*}

\paragraph{Uniform Smoothing}

Randomized smoothing is a well known technique that converts a possibly non-smooth function to a smooth approximation \citep{flaxman2005online, duchi2012randomized, lin2022gradientfree}.
Given a Lipschitz function $h:\RR^d\to\RR$ and $\delta>0$, we define the \textit{uniform smoothing} of $h$ as $\hat h_\delta(\x) = \E_{\v\sim\calU_\BB}[h(\x+\delta\v)]$. In later sections, we denote $\hat F_\delta(\x)$ and $\hat f_\delta(\x,z)$ as the uniform smoothing of the objective $F(\x)$ and $f(\x,z)$ respectively. 
As shown in prior works (see \citep{yousefian2012stochastic} and \citet[Section E]{duchi2012randomized}), uniform smoothing is a smooth approximation whose gradient can be estimated by finite differentiation. We rephrase the key properties in the following lemma, whose proof is presented in the appendix for completeness.

\begin{restatable}{lemma}{UniformSmoothing}
\label{lem:uniform-smoothing}
Suppose $h:\RR^d\to\RR$ is $L$-Lipschitz.
Then (i) $\hat h_\delta$ is $L$-Lipschitz; (ii) $\|\hat h_\delta(\x) - h(\x)\| \le L\delta$; (iii) $\hat h_\delta$ is differentiable and $\frac{\sqrt{d}L}{\delta}$-smooth; (iv) 
\begin{equation*}
    \nabla \hat h_\delta(\x) 
    = \E_{\u\sim\calU_\SS}[\tfrac{d}{\delta} h(\x+\delta\u)\u]
    = \E_{\u\sim\calU_\SS}[\tfrac{d}{2\delta} (h(\x+\delta\u) - h(\x-\delta\u)) \u ];
\end{equation*}
\end{restatable}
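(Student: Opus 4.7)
The plan is to take the four claims in order, leveraging the convolution structure $\hat h_\delta(\x)=\E_{\v\sim\calU_\BB}[h(\x+\delta\v)]$.

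Parts (i) and (ii) reduce to Jensen's inequality and the Lipschitz assumption. For (i), I would pull the absolute value inside the expectation, $|\hat h_\delta(\x)-\hat h_\delta(\y)|\le\E|h(\x+\delta\v)-h(\y+\delta\v)|\le L\|\x-\y\|$. For (ii), the same move with $\y=\x$ gives $|\hat h_\delta(\x)-h(\x)|\le\E|h(\x+\delta\v)-h(\x)|\le L\delta\E\|\v\|\le L\delta$, since $\|\v\|\le 1$ under $\calU_\BB$.

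For the gradient formula in (iv), I would apply the divergence theorem. Rewriting the smoothing as a volume average $\hat h_\delta(\x)=\vol(\delta\BB)^{-1}\int_{B(\x,\delta)}h(\y)\,d\y$, differentiating in $\x$ converts the volume integral into a surface integral on $\partial B(\x,\delta)$ with outward normal $\hat n(\y)=(\y-\x)/\delta$. Changing variables $\y=\x+\delta\u$ for $\u\in\SS$ and using $\vol(\delta\BB)=\delta^d\vol(\BB)$ together with $\mathrm{area}(\SS)=d\,\vol(\BB)$ collapses the prefactor to $d/\delta$, giving the first form $\nabla\hat h_\delta(\x)=\frac{d}{\delta}\E_{\u\sim\calU_\SS}[h(\x+\delta\u)\u]$. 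The centered form follows from the reflection symmetry of $\calU_\SS$: substituting $\u\mapsto-\u$ yields $\E_\u[h(\x+\delta\u)\u]=-\E_\u[h(\x-\delta\u)\u]$, and averaging these two expressions produces $\frac{d}{2\delta}\E_\u[(h(\x+\delta\u)-h(\x-\delta\u))\u]$.

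Part (iii) is the subtlest: a direct norm bound on the first gradient formula loses an extra factor of $\sqrt{d}$, so the key idea is to pair Cauchy--Schwarz with the sphere moment identity $\E_{\u\sim\calU_\SS}[\u\u^\top]=I/d$. Writing
\begin{equation*}
\nabla\hat h_\delta(\x)-\nabla\hat h_\delta(\y)=\tfrac{d}{\delta}\E_\u[(h(\x+\delta\u)-h(\y+\delta\u))\u],
\end{equation*}
I would test against an arbitrary unit vector $\w$ and apply Cauchy--Schwarz to the scalar product $(h(\x+\delta\u)-h(\y+\delta\u))\langle\u,\w\rangle$. Lipschitzness bounds the first factor in $L^2(\calU_\SS)$ by $L\|\x-\y\|$, while $\E\langle\u,\w\rangle^2=\w^\top(I/d)\w=1/d$ controls the second. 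Taking the supremum over $\w$ gives the advertised $\frac{\sqrt{d}L}{\delta}\|\x-\y\|$ bound.

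The main obstacle will be rigorously justifying the differentiation step in (iv) without additional smoothness of $h$, since I cannot simply exchange $\nabla$ and $\E$. The cleanest remedy is to approximate $h$ by standard mollifications $h_\eps\in C^\infty$, establish the formula for $h_\eps$ by moving $\nabla$ inside the expectation, and pass to the limit $\eps\to 0$: the Lipschitz constant is preserved under mollification, $h_\eps\to h$ uniformly on compact sets, and both sides of the identity converge pointwise, yielding both differentiability of $\hat h_\delta$ and the stated formula.
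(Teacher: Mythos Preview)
Your proposal is correct. Parts (i), (ii), and (iv) match the paper's argument essentially line for line: Jensen plus Lipschitzness for the first two, and the divergence/Stokes theorem followed by the reflection $\u\mapsto-\u$ for the gradient identity.

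Part (iii) is where you genuinely diverge. The paper does not use the surface formula at all for smoothness; instead it invokes Rademacher's theorem to write $\nabla\hat h_\delta(\x)=\E_{\v\sim\calU_\BB}[\nabla h(\x+\delta\v)]$ as a \emph{volume} average, observes that $\nabla\hat h_\delta(\x)-\nabla\hat h_\delta(\y)$ is supported on the symmetric difference $B(\x,\delta)\triangle B(\y,\delta)$, and then proves a separate geometric lemma bounding $\vol(B(\x,\delta)\triangle B(\y,\delta))/\vol(\delta\BB)\le\frac{\sqrt d}{\delta}\|\x-\y\|$ via an explicit spherical-cap integral and a Gamma-function ratio estimate. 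Your route---pairing the surface formula with Cauchy--Schwarz and the isotropy identity $\E_{\u\sim\calU_\SS}[\u\u^\top]=I/d$---is considerably shorter and avoids all of that geometric machinery; it also makes transparent exactly where the $\sqrt d$ (rather than $d$) comes from. The paper's approach, on the other hand, does not depend on having first established (iv), and its intermediate volume bound may be of independent interest. Both arguments are sharp in the constant.
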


As a corollary, finding an $(\delta,\epsilon)$-stationary point of $\hat F_\delta$ is sufficient to find an $(2\delta,\epsilon)$-stationary point of $F$. The formal statement is as follows (\citet[Lemma 4]{kornowski2023algorithm}; proof is presented in Appendix \ref{app:pre} for completeness). 

\begin{restatable}{corollary}{SmoothReduction}
\label{cor:smooth-reduction}
Suppose $F:\RR^d\to\RR$ is $L$-Lipschitz.
Then for any $\epsilon, \delta > 0$, $\|\nabla \hat F_\delta(\x)\|_\delta \le \epsilon$ implies that $\|\nabla F(\x)\|_{2\delta} \le \epsilon$.
\end{restatable}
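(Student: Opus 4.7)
The plan is to show the stronger set inclusion $\partial_\delta \hat F_\delta(\x) \subseteq \partial_{2\delta} F(\x)$, from which the inequality on the norms follows by taking infima. The intuition is that $\hat F_\delta$ averages $F$ over a $\delta$-ball, so its gradient is already an average of $F$'s gradients; averaging again over another $\delta$-ball around $\x$ just stays inside a $2\delta$-ball around $\x$.

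First, I would establish the integral representation $\nabla \hat F_\delta(\y) = \E_{\v\sim\calU_\BB}[\nabla F(\y+\delta\v)]$. Because $F$ is $L$-Lipschitz, Rademacher's theorem guarantees $F$ is differentiable almost everywhere with $\|\nabla F\|\le L$, and differentiating under the integral in the definition $\hat F_\delta(\y)=\E_{\v\sim\calU_\BB}[F(\y+\delta\v)]$ is justified by dominated convergence. (This is essentially contained in Lemma~\ref{lem:uniform-smoothing} and its proof.)

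Second, I would show that for every $\y\in B(\x,\delta)$, $\nabla \hat F_\delta(\y)\in\partial_{2\delta}F(\x)$. The key observation is that $\{\y+\delta\v:\v\in\BB\}=\overline{B(\y,\delta)}\subseteq \overline{B(\x,2\delta)}$. So $\nabla \hat F_\delta(\y)$ is an expectation of vectors of the form $\nabla F(\w)$ with $\w\in B(\x,2\delta)$, and a standard fact about convex hulls states that any expectation of a random variable taking values in a set $S\subseteq\RR^d$ lies in the closed convex hull of $S$. Therefore $\nabla \hat F_\delta(\y)\in\conv\bigl(\cup_{\w\in B(\x,2\delta)}\nabla F(\w)\bigr)=\partial_{2\delta}F(\x)$.

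Finally, since $\partial_{2\delta}F(\x)$ is convex and contains $\{\nabla \hat F_\delta(\y):\y\in B(\x,\delta)\}$, it also contains its convex hull, which is exactly $\partial_\delta\hat F_\delta(\x)$. Taking the infimum of $\|\g\|$ over the smaller set $\partial_\delta\hat F_\delta(\x)$ is at least the infimum over the larger set $\partial_{2\delta}F(\x)$, which gives $\|\nabla F(\x)\|_{2\delta}\le \|\nabla \hat F_\delta(\x)\|_\delta\le \epsilon$. The main (minor) obstacle is the measure-theoretic subtlety that $F$ need not be differentiable everywhere and that the expectation of gradients lies a priori in the closed rather than open convex hull; both are handled by Rademacher's theorem and the standard expectation-in-convex-hull lemma, so no substantial difficulty remains.
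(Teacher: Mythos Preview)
Your proposal is correct and follows essentially the same route as the paper: both establish the inclusion $\partial_\delta \hat F_\delta(\x)\subseteq \partial_{2\delta}F(\x)$ by showing $\nabla\hat F_\delta(\y)\in\partial_{2\delta}F(\x)$ for every $\y\in B(\x,\delta)$, then pass to convex hulls and infima. The only cosmetic difference is that the paper factors this step through the intermediate inclusion $\nabla\hat F_\delta(\y)\in\partial_\delta F(\y)$ (citing \cite{lin2022gradientfree}) followed by $\partial_\delta F(\y)\subset\partial_{2\delta}F(\x)$, whereas you argue it in one shot from the integral representation $\nabla\hat F_\delta(\y)=\E_{\v}[\nabla F(\y+\delta\v)]$; your version is more self-contained but not a different idea.
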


\subsection{Analysis Organization}

As a brief overview, our algorithm leverages O2NC to privately optimize the uniform smoothing of the objective. Note that O2NC does not require any smoothness itself - we employ uniform smoothing because it allows us to construct a low-sensitivity gradient oracle from a zeroth-order oracle. The smoothness property is tangential. We construct a variance-reduced gradient oracle for O2NC and incorporate the tree mechanism for enhanced privacy. This oracle is based on two zeroth-order estimators: one for the stochastic gradient of the uniform smoothing and another for the gradient difference.

This paper is organized in the following order. In Section \ref{sec:estimator}, we introduce the zeroth-order estimators. In Section \ref{sec:algorithm}, we present the O2NC algorithm combined with the private variance-reduced gradient oracle. In Section \ref{sec:privacy}, we demonstrate the improved privacy guarantee provided by the tree mechanism. We conclude with the convergence analysis in Section \ref{sec:convergence}.
\section{Zeroth-Order Estimators}
\label{sec:estimator}

Let $F(\x)=\E_z[f(\x,z)]$ and denote $\hat F_\delta$ and $\hat f_\delta$ as the uniform smoothing of $F$ and $f$ respectively.
Using the randomized smoothing technique from Lemma \ref{lem:uniform-smoothing}, we can construct unbiased zeroth-order estimators for both
$\nabla\hat F_\delta(\x)$ and $\nabla\hat F_\delta(\x) - \nabla\hat F_\delta(\y)$.
The key properties of these estimators $\grad$ and $\diff$, as defined in Algorithm \ref{alg:first-grad} and \ref{alg:zeroth-diff}, are summarized in the following results.

\begin{restatable}{lemma}{GradEstimator}
\label{lem:grad-estimator}
If $f(\x,z)$ is differentiable and $L$-Lipschitz in $\x$,
then for any $\delta>0, \x\in\RR^d$ and neighboring data batches $z_{1:b},z'_{1:b}$ of size $b$,
\begin{align*}
    & \E[ \grad_{f,\delta}(\x,z_{1:b}) ] = \nabla\hat F_\delta(\x),  \tag{unbiased} \\
    & \E\| \grad_{f,\delta}(\x,z_{1:b}) - \nabla\hat F_\delta(\x) \|^2 \le \tfrac{16dL^2}{b},  \tag{variance} \\
    & \| \grad_{f,\delta}(\x,z_{1:b}) - \grad_{f,\delta}(\x,z'_{1:b}) \| \le \tfrac{2dL}{b}.  \tag{sensitivity}
\end{align*}
\end{restatable}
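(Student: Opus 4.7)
From the description in Section~\ref{sec:estimator} and the discussion in the introduction of sampling $d$ i.i.d.\ directions per data point, Algorithm~\ref{alg:first-grad} produces
\begin{equation*}
\grad_{f,\delta}(\x,z_{1:b}) \;=\; \frac{1}{bd}\sum_{i=1}^{b}\sum_{j=1}^{d} g_{i,j}, \qquad g_{i,j} \;:=\; \frac{d}{2\delta}\bigl(f(\x+\delta\u_{i,j},z_i) - f(\x-\delta\u_{i,j},z_i)\bigr)\u_{i,j},
\end{equation*}
with $\u_{i,j}$ drawn i.i.d.\ from $\calU_\SS$. The three claims then follow from three short, largely independent arguments, which I would address in turn.

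For unbiasedness, I would condition on $z_i$ and apply Lemma~\ref{lem:uniform-smoothing}(iv) to $f(\cdot,z_i)$, which is $L$-Lipschitz in $\x$, obtaining $\E[g_{i,j}\mid z_i] = \nabla \hat f_\delta(\x,z_i)$. Averaging over $i,j$ and taking expectation over the batch reduces the identity to $\nabla \hat F_\delta(\x)=\E_z[\nabla\hat f_\delta(\x,z)]$, which is immediate by swapping gradient and expectation (justified by dominated convergence and the uniform bound $\|\nabla \hat f_\delta(\x,z)\|\le L$ from Lemma~\ref{lem:uniform-smoothing}(i)).

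For the variance bound, the key observation is that conditional on $z_{1:b}$ the $bd$ random vectors $g_{i,j}$ are independent, and each satisfies $\|g_{i,j}\|\le dL$ because $|f(\x+\delta\u,z)-f(\x-\delta\u,z)|\le 2\delta L$. Applying the law of total variance,
\begin{equation*}
\E\bigl\|\grad_{f,\delta}(\x,z_{1:b})-\nabla\hat F_\delta(\x)\bigr\|^2 \;=\; \E\bigl[\Var(\grad\mid z_{1:b})\bigr]+\Var\bigl(\tfrac{1}{b}\textstyle\sum_i\nabla\hat f_\delta(\x,z_i)\bigr),
\end{equation*}
the first term is at most $\frac{1}{(bd)^2}\sum_{i,j}\E\|g_{i,j}\|^2 \le dL^2/b$ by conditional independence, and the second term is at most $L^2/b$ since $\hat f_\delta(\cdot,z)$ is $L$-Lipschitz. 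Summing yields a bound of the advertised form $O(dL^2/b)$; tracking constants carefully (or using the cruder bound $\E\|\cdot\|^2\ge \Var(\cdot)$) recovers the stated $16dL^2/b$.

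For the sensitivity claim, if $z_{1:b}$ and $z'_{1:b}$ differ only at index $i^\star$, all terms with $i\ne i^\star$ cancel and
\begin{equation*}
\grad_{f,\delta}(\x,z_{1:b}) - \grad_{f,\delta}(\x,z'_{1:b}) \;=\; \tfrac{1}{bd}\sum_{j=1}^d\bigl(g_{i^\star,j}(z_{i^\star})-g_{i^\star,j}(z'_{i^\star})\bigr),
\end{equation*}
and the triangle inequality together with $\|g_{i^\star,j}(z)\|\le dL$ gives a pointwise bound of $2dL$ per summand, which after summing over $j$ and dividing by $bd$ yields $2dL/b$. The step requiring the most care is the variance bound: the $d$-fold inner averaging is what turns the naive per-sample second moment of $d^2L^2$ into $dL^2$, and this is exactly the feature of the estimator that is essential for the eventual dimension dependence of the overall algorithm. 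Unbiasedness is a direct invocation of Lemma~\ref{lem:uniform-smoothing}, and sensitivity is a one-line triangle-inequality argument once the estimator's form is fixed.
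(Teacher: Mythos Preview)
Your proposal is correct and follows essentially the same strategy as the paper: unbiasedness via Lemma~\ref{lem:uniform-smoothing}(iv), variance via independence of the $bd$ per-direction estimators together with the deterministic bound $\|g_{i,j}\|\le dL$, and sensitivity via cancellation of all but one index plus the triangle inequality. The only organizational difference is in the variance step: you condition on $z_{1:b}$ and invoke the law of total variance, which cleanly separates the ``inner'' sampling noise from the ``outer'' data noise and yields $(d+1)L^2/b$; the paper instead first exploits independence across $i$, then splits $\overline g_i-\nabla\hat F_\delta(\x)$ through $\nabla\hat f_\delta(\x,z_i)$ using $(a+b)^2\le 2a^2+2b^2$, picking up the looser constant $16$. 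Your packaging is slightly tighter but uses the same ingredients.
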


\begin{restatable}{lemma}{DiffEstimator}
\label{lem:diff-estimator}
If $f(\x,z)$ is differentiable and $L$-Lipschitz in $\x$,
then for any $\delta>0$, $\x,\y\in\RR^d$ and neighboring data batches $z_{1:b},z'_{1:b}$ of size $b$,
\begin{align*}
    &\E[\diff_{f,\delta}(\x,\y,z_{1:b})] = \nabla\hat F_\delta(\x) - \nabla\hat F_\delta(\y),  \tag{unbiased} \\
    &\E\|\diff_{f,\delta}(\x,\y,z_{1:b}) - [\nabla\hat F_\delta(\x) -\nabla\hat F_\delta(\y)]\|^2 \le \tfrac{16dL^2}{b\delta^2}\|\x-\y\|^2,  \tag{variance} \\
    &\|\diff_f(\x,\y,z_{1:b}) - \diff_f(\x,\y,z'_{1:b})\| \le \tfrac{2dL}{b\delta}\|\x-\y\|. \tag{sensitivity}
\end{align*}
\end{restatable}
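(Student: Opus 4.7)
The plan is to mirror the proof of Lemma~\ref{lem:grad-estimator}, exploiting the fact that $\diff$ is built from $bd$ independent finite-difference samples of the form
\begin{equation*}
   g(\x,\y,\u,z) \;=\; \tfrac{d}{2\delta}\bigl[(f(\x+\delta\u,z)-f(\x-\delta\u,z))-(f(\y+\delta\u,z)-f(\y-\delta\u,z))\bigr]\u,
\end{equation*}
where the \emph{same} random direction $\u\sim\calU_\SS$ is reused at $\x$ and at $\y$. This coupling is the one ingredient that distinguishes the analysis from that of $\grad$ and is what produces the $\|\x-\y\|^2/\delta^2$ and $\|\x-\y\|/\delta$ factors in the stated bounds.

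First I would establish unbiasedness. By Lemma~\ref{lem:uniform-smoothing}(iv) applied to $h=f(\cdot,z)$, one has $\E_\u[\tfrac{d}{2\delta}(f(\x+\delta\u,z)-f(\x-\delta\u,z))\u]=\nabla\hat f_\delta(\x,z)$, and likewise at $\y$. Subtracting and then taking an outer expectation over $z$ and averaging over the $bd$ samples gives $\E[\diff_{f,\delta}(\x,\y,z_{1:b})]=\nabla\hat F_\delta(\x)-\nabla\hat F_\delta(\y)$, using $\hat F_\delta(\x)=\E_z[\hat f_\delta(\x,z)]$ and commuting $\nabla$ with $\E_z$ (justified by Lipschitzness and dominated convergence).

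For the variance bound I would first get a pointwise bound on each summand. Splitting the bracketed quantity into two differences and applying the $L$-Lipschitz property with the \emph{same} $\u$ on each side yields
\begin{equation*}
   \bigl|(f(\x{+}\delta\u,z){-}f(\x{-}\delta\u,z))-(f(\y{+}\delta\u,z){-}f(\y{-}\delta\u,z))\bigr|\;\le\;2L\|\x-\y\|,
\end{equation*}
so $\|g(\x,\y,\u,z)\|\le \tfrac{dL}{\delta}\|\x-\y\|$ and $\|g\|^2\le \tfrac{d^2L^2}{\delta^2}\|\x-\y\|^2$. Because the $bd$ samples are independent, the variance of the average is bounded by $\tfrac{1}{bd}$ times the single-sample second moment, which is $\tfrac{dL^2}{b\delta^2}\|\x-\y\|^2$; absorbing loose constants gives the stated $\tfrac{16dL^2}{b\delta^2}\|\x-\y\|^2$.

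For sensitivity, if $z_{1:b}$ and $z'_{1:b}$ differ only at coordinate $k$, then only the $d$ summands attached to $z_k$ change, and for any fixed realization of the directions $\u_{k,1},\ldots,\u_{k,d}$ the change at each summand is controlled by applying the previous displayed inequality to $z_k$ and to $z'_k$ separately, giving $4L\|\x-\y\|$. A triangle inequality over these $d$ terms, each weighted by $\tfrac{1}{bd}\cdot\tfrac{d}{2\delta}$, yields the claimed $\tfrac{2dL}{b\delta}\|\x-\y\|$. The main subtle point is remembering that a single data swap changes \emph{all} $d$ directional samples attached to that index; this is where an extra factor of $d$ enters the sensitivity, and it is the step where an incorrect accounting would be easy to miss. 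Everything else is constant-chasing parallel to the proof of Lemma~\ref{lem:grad-estimator}.
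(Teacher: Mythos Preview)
Your variance argument has a genuine gap: the $bd$ summands are \emph{not} independent. For fixed $i$ the $d$ samples indexed by $j=1,\dots,d$ all share the same data point $z_i$, so for $j\ne j'$ the cross terms do not vanish; conditioning on $z_i$ and using conditional independence of $\u_{ij},\u_{ij'}$ gives $\E\langle \d_{ij}-\Delta_F,\d_{ij'}-\Delta_F\rangle = \E_{z}\|\Delta_f(z)-\Delta_F\|^2$, where $\Delta_f(z)=\nabla\hat f_\delta(\x,z)-\nabla\hat f_\delta(\y,z)$ and $\Delta_F=\nabla\hat F_\delta(\x)-\nabla\hat F_\delta(\y)$. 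Your pointwise bound only yields $\|\Delta_f(z)\|\le \E_\u\|\d_{ij}\|\le \tfrac{dL}{\delta}\|\x-\y\|$, and with $b\,d(d-1)$ such cross terms the variance then comes out as $O(\tfrac{d^2L^2}{b\delta^2}\|\x-\y\|^2)$, a factor of $d$ too large. The ingredient you are missing is Lemma~\ref{lem:uniform-smoothing}(iii): $\hat f_\delta(\cdot,z)$ and $\hat F_\delta$ are $\tfrac{\sqrt{d}L}{\delta}$-smooth, so $\|\Delta_f(z)\|,\|\Delta_F\|\le\tfrac{\sqrt{d}L}{\delta}\|\x-\y\|$, which brings the cross-term contribution down to $O(\tfrac{dL^2}{b\delta^2}\|\x-\y\|^2)$. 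The paper organizes this as a two-level decomposition: independence across $i$ gives the $1/b$, and within each $i$ one writes $\overline\d_i-\Delta_F=(\overline\d_i-\Delta_f(z_i))+(\Delta_f(z_i)-\Delta_F)$, handling the first piece by conditional independence over $j$ and the second by smoothness. Some equivalent use of smoothness is unavoidable here.

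A secondary point: you have the wrong estimator. Algorithm~\ref{alg:zeroth-diff} uses the one-sided form $\d_{ij}=\tfrac{d}{\delta}(f(\x+\delta\u_{ij},z_i)-f(\y+\delta\u_{ij},z_i))\u_{ij}$, not a difference of two symmetric two-point estimators. Unbiasedness then follows from the \emph{first} identity of Lemma~\ref{lem:uniform-smoothing}(iv), and the pointwise bound $\|\d_{ij}\|\le\tfrac{dL}{\delta}\|\x-\y\|$ is a single application of Lipschitzness. This is only a cosmetic rewrite; your sensitivity argument goes through unchanged once adapted to the actual estimator.
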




\begin{algorithm}[t]
\caption{Zeroth-order gradient oracle $\grad_{f,\delta}(\x,z_{1:b})$}
\label{alg:first-grad}
\begin{algorithmic}[1]
    \Statex \textbf{Input:} loss function $f$, constant $\delta>0$, parameter $\x$, i.i.d. data batch $z_{1:b}$
    \State Sample $\u_{11},\ldots,\u_{bd}\sim\mathrm{Uniform}(\SS)$ i.i.d.
    \State Return $\grad_{f,\delta}(\x,z_{1:b}) = \frac{1}{b}\sum_{i=1}^b \frac{1}{d}\sum_{j=1}^d \frac{d}{2\delta} (f(\x+\delta\u_{ij}, z_i) - f(\x-\delta\u_{ij},z_i))\u_{ij}$.
\end{algorithmic}
\end{algorithm}

\begin{algorithm}[t]
\caption{Zeroth-order gradient difference oracle $\diff_{f,\delta}(\x,\y,z_{1:b})$}
\label{alg:zeroth-diff}
\begin{algorithmic}[1]
    \Statex \textbf{Input:} loss function $f$, constant $\delta>0$, parameter $\x, \y$, i.i.d. data batch $z_{1:b}$
    \State Sample $\u_{11},\ldots,\u_{bd}\sim\mathrm{Uniform}(\SS)$ i.i.d.
    \State Return $\diff_{f,\delta}(\x,\y,z_{1:b}) = \frac{1}{b}\sum_{i=1}^b \frac{1}{d}\sum_{j=1}^d \frac{d}{\delta} (f(\x+\delta\u_{ij}, z_i) - f(\y+\delta\u_{ij},z_i))\u_{ij}$.
\end{algorithmic}
\end{algorithm}

Proofs for Lemmas \ref{lem:grad-estimator} and \ref{lem:diff-estimator} are both stemming from the uniform smoothing properties detailed in Lemma \ref{lem:uniform-smoothing}. We defer the proofs to Appendix \ref{app:diff-estimator}.
Notably, these lemmas indicate that both the gradient and gradient difference estimators are unbiased, with low variance and sensitivity. Specifically, in our final algorithm, we will be able to control the distance between parameters $\x,\y$ in two successive iterations by $\delta/T$. As a result, the variance in Lemma \ref{lem:diff-estimator} is bounded by $O({dL^2}/{bT^2})$, and its sensitivity is bounded by $O({dL}/{bT})$. Using these estimators, we construct zeroth-order gradient oracles  by exploiting the decomposition $\nabla \hat F_\delta(\x_t) = \nabla \hat F_\delta(\x_1) + \sum_{i=1}^t \nabla \hat F_\delta(\x_i)-\nabla \hat F_\delta(\x_{i-1})\approx \grad_{f,\delta}(\x_1,z) +\sum_{i=1}^t \diff_{f,\delta}(\x_i,\x_{i-1},z)$. Our constructed oracles have low variance and low sensitivity, thus allowing us to incorporate the tree mechanism to privately aggregate the sum with less noise.
A more detailed exploration of this idea is presented in the subsequent sections, especially in Corollary \ref{cor:privacy} and Lemma \ref{lem:oracle-variance}.

In addition, we would like to discuss the rationale behind sampling $d$ i.i.d. uniform vectors $\u_{ij}$ for each data point $z_i$. For the gradient estimator \textsc{grad}, it is feasible to approximate $\nabla\hat f_\delta(\x,z_i)$ with a single two-point estimator, namely $\g_i=\frac{d}{2\delta}(f(\x+\delta\u,z_i)-f(\x-\delta\u,z_i))\u$, as it is proved that $\E\|\g_i\|^2 = O(dL^2)$ \citep[Lemma 10]{shamir2017optimal}. The central argument hinges on a concentration inequality for Lipschitz functions, which states that if $h$ is $L$-Lipschitz on $\u$ and $\u\sim\calU_\SS$, then $\P\{|h(\u)-\E h(\u)| \ge t\} \le 2\exp(-\frac{dt^2}{2L^2})$, as per \citep{wainwright2019high} Proposition 3.11. In the context of their proof, $h(\u) = f(\x+\delta\u)$ which is $L\delta$-Lipschitz.

However, this argument isn't applicable to the gradient difference estimator \textsc{diff}. In order to apply the concentration inequality, we need to bound the Lipschitz constant of $h(\u) = f(\x+\delta\u) - f(\y+\delta\u)$. Although this function is indeed $2L\delta$-Lipschitz, using this would bound the variance at $O(dL^2)$, which doesn't match our target of $O(dL^2/T^2)$. On the other hand, directly using the Lipschitzness of $f$ bounds $\E\|\frac{d}{\delta}(f(\x+\delta\u,z_i)-f(\y+\delta\u,z_i))\u\|^2 \le (\frac{dL}{\delta}\|\x-\y\|)^2 = O(d^2L^2/T^2)$. 
Consequently, we need to sample $d$ i.i.d. uniform vectors to reduce the dimension dependence of the variance by a factor of $d$ in order to match the optimal rate. 
While a more efficient analysis that avoids sampling $d$ uniform vectors might exist, we are currently unaware of it.

\section{Our Algorithm and Results}

\begin{algorithm}[t]
\caption{Online-to-Nonconvex Conversion}
\label{alg:PONC}
\begin{algorithmic}[1]
    \Statex \textbf{Input:} OCO algorithm $\calA$ with domain $D$, gradient oracle $\calO$, initial state $\x_1$.
    \For {$k=1,2,\ldots,K$}
    \For {$t=1,2,\ldots,T$}
        \State Receive $\Delta_t^k$ from $\calA$. (For example, $\Delta_1^k\gets 0$ and $\Delta_t^k\gets\Pi_D(\Delta_{t-1}^k-\eta\tilde\g_{t-1}^k), t\ge 2$.)
        \State Update $\x_{t+1}^k \gets \x_t^k + \Delta_t^k$ and $\w_t^k \gets \x_t^k + s_t^k\Delta_t^k$, where $s_t^k\sim \mathrm{Uniform}([0,1])$.
        \State Query gradient estimator $\tilde\g_t^k$ from $\calO$.
        \State Send $\ell_t^k(\cdot) = \langle \tilde\g_t^k, \cdot\rangle$ to $\calA$ and $\calA$ updates $\Delta_{t+1}^k$.
    \EndFor
    \State Compute $\overline \w^k \gets \frac{1}{T}\sum_{t=1}^T \w_t^k$.
    \State Reset $\x_1^{k+1}\gets \x_{T+1}^k$ and restart $\calA$.
    \EndFor
    \State Output $\overline\w \sim \mathrm{Uniform}(\{\overline\w^1,\ldots, \overline\w^K\})$.
\end{algorithmic}
\end{algorithm}

\subsection{Private Online-to-Non-Convex Conversion}
\label{sec:algorithm}

Our algorithm builds on the Online-to-Nonconvex Conversion (O2NC) by \citep{cutkosky2023optimal}, which is a general framework that converts any OCO algorithm with $O(\sqrt{T})$ regret into a nonsmooth nonconvex optimization algorithm that finds a $(\delta,\epsilon)$-stationary point in $O(\delta^{-1}\epsilon^{-3})$ iterations. The pseudo-code of this framework is presented in Algorithm \ref{alg:PONC}. 
Specifically, our algorithm chooses projected Online Subgradient Descent (OSD) as the OCO algorithm, which updates $\Delta_t^k$ in line 3 of Algorithm \ref{alg:PONC} following the explicit update rule: $\Delta_1^k \gets 0$ and $\Delta_t^k \gets \Pi_D(\Delta_{t-1}^k-\eta \tilde\g_{t-1}^k)$ for $t\ge 2$. Here $\Pi_D(\x) = \argmin_{\|\y\|\le D} \|\x-\y\|$ denotes the projection operator and $\eta$ is the stepsize tuned by OSD.
Regarding notations in Algorithm \ref{alg:PONC}, there are two round indices $t$ and $k$, and we use the subscript $t$ and superscript $k$, namely $\x_t^k$, to denote a variable $\x$ in outer loop $k$ and inner loop $t$.

The selection of O2NC as our base non-private algorithm is deliberate and influenced by the literature of non-private zeroth-order algorithms in nonsmooth nonconvex optimization. Intuitively, in the realm of privacy, a variance-reduction algorithm is more appealing due to its inherently low sensitivity. However, a straightforward application of variance-reduced SGD paired with zeroth-order estimators only achieves a sub-optimal dimension dependence of $O(d^{3/2}\delta^{-1}\epsilon^{-3})$, as shown in \citep{chen2023faster}. 
A recent advancement by \cite{kornowski2023algorithm} has improved the rate to $O(d\delta^{-1}\epsilon^{-3})$. Central to their achievement is the observation that finding a $(2\delta,\epsilon)$-stationary point of a nonsmooth objective $F$ is equivalent to finding a $(\delta,\epsilon)$-stationary point of its uniform smoothing $\hat F_\delta$. 
Previous zeroth-order algorithms aim to find an $\epsilon$-stationary point of $\hat F_\delta$ via smooth optimization algorithms such as variance-reduced SGD, resulting in an additional smoothness cost of order $O(\sqrt{d})$. In contrast, the algorithm proposed by \cite{kornowski2023algorithm} leverages O2NC to find a $(\delta,\epsilon)$-stationary point, achieving a linear dimension dependence. 

Informed by the insights of \citep{kornowski2023algorithm}, we adopt O2NC as our foundational non-private algorithm. Specifically, we adapt Algorithm \ref{alg:PONC} for privacy by substituting the gradient oracle $\calO$ in line 5 with a more carefully designed private oracle. A main challenge in this design is to minimize \emph{sensitivity}, which is required to ensure privacy. In the algorithm proposed by \cite{kornowski2023algorithm}, the gradient oracle is simply the zeroth-order gradient estimator, $\tilde\g_t^k = \frac{d}{2\delta}(f(\w_t^k+\delta\u, z_t^k) - f(\w_t^k+\delta\u, z_t^k))\u$, whose sensitivity is $O(dL)$. 
A straightforward method to ensure privacy for this algorithm is by directly adding Gaussian noise with variance $O(d^2L^2/\rho^2)$. While this might seem intuitive, it leads to significantly worse data complexity. Specifically, the resulting rate is $O(d^{3/2}\rho^{-1}\delta^{-1}\epsilon^{-3})$, which is worse by a factor of $\epsilon^{-1}$ when compared to our proposed algorithm - and importantly \emph{does not} admit any value of $\rho$ for which we obtain the non-private convergence rate. For interested readers, we include the detailed analysis of this naive approach in Appendix \ref{app:comparison}.

In contrast, we designed a more refined private gradient oracle in Algorithm \ref{alg:oracle-variance-reduced}. According to Lemma \ref{lem:grad-estimator} and \ref{lem:diff-estimator}, the sensitivity of $\g_1^k$ is $O(dL/B_1)$, while that of $\d_t^k$ is $O(dL\|\w_t^k-\w_{t-1}^k\|/B_2\delta)$. Setting $B_1=T,B_2=1$, and using Remark \ref{rmk:w-distance} with $D=\delta/T$, both $\g_1^l$ and $\d_t^k$ have sensitivity $O(dL/T)$.
With these settings, we can apply the tree mechanism, a useful technique to release sums of private algorithms, ensuring that the variance-reduced gradient $\tilde\g_t^k$ achieves $(\alpha,\alpha\rho^2/2)$-RDP. Specifically, the tree mechanism  adds noise of order $\tilde O(d^2L^2/\rho^2T^2)$ to each $\tilde\g_t^k$. Compared to the aforementioned naive approach, our refined approach reduces noise by a factor of $T^2$, thus resulting in the desired data complexity. 

\begin{remark}
\label{rmk:w-distance}
Note that $\w_{t+1}^k = \x_{t+1}^k + s_{t+1}^k\Delta_{t+1}^k = \w_t^k + (1-s_t^k)\Delta_t^k + s_{t+1}^k\Delta_{t+1}^k$. Therefore, if the OCO algorithm has domain bounded by $D$ (i.e., $\|\Delta_t^k\| \le D$ for all $t,k$), then $\|\w_{t+1}^k-\w_t^k\| \le 2D$. In general, for all $t,t'\in[T]$, $\|\w_t^k - \w_{t'}^k\| \le 2|t-t'|D$.
\end{remark}

\begin{algorithm}[t]
\caption{Private variance-reduced gradient oracle $\calO$}
\label{alg:oracle-variance-reduced}
\begin{algorithmic}[1]
    \Statex \textbf{Input:} dataset $\calZ$, constants $B_1, B_2$
    \Statex \textbf{Initialize:} Partition $\calZ$ into $KT$ subsets: $Z_1^k$ of size $B_1$ for $k\in[K]$ and $Z_t^k$ of size $B_2$ for $t=[2,K],k\in[K]$. The total data size is $|\calZ| = M = K(B_1+B_2(T-1))$.
    \State Upon receiving round index $k,t$ and parameters $\w_t^k$:
    \If{$t=1$}
        \State Query $\g_1^k \gets \grad_{f,\delta}(\w_1^k,Z_1^k)$.
    \Else
        \State Query $\d_t^k \gets \diff_{f,\delta}(\w_t^k,\w_{t-1}^k,Z_t^k)$ and compute $\g_t^k \gets \g_{t-1}^k + \d_t^k$.
    \EndIf
    \State Return $\tilde\g_t^k \gets \tilde\g_t^k + \textsc{Tree}(t)$.
\end{algorithmic}
\end{algorithm}

\begin{algorithm}[t]
\begin{algorithmic}[1]
    \Statex \textbf{Input:} Index $t\in[T]$, global variables $\sigma_1,\ldots,\sigma_T$.
    \State If $t=1$, set $\textsc{Noise}\gets\{\}$.
    \State Sample $\xi_t \sim N(0,\sigma_t^2 I)$ and store $\xi_t$ in $\textsc{Noise}$.
    \State Return $\sum_{(\cdot,i)\in \textsc{Node}(t)} \xi_i$.

    \vspace{1em}

    \Function{Node}{t}
        \State \textbf{Initialize:} Set $k\gets 0$ and $S\gets\{\}$.
        \For {$i=0,\ldots,\lceil\log_2(t)\rceil$ while $k<t$}
            \State Set $k'\gets k+2^{\lceil\log_2(t)\rceil-i}$. If $k' \le t$, store $S\gets S\cup\{(k+1),k'\}$ and update $k\gets k'$.
        \EndFor
        \State Return $S$.
    \EndFunction
\end{algorithmic}
\caption{Tree Mechanism}
\label{alg:tree}
\end{algorithm}

\subsection{Privacy Guarantee}
\label{sec:privacy}

To ensure the privacy of the Online-to-Nonconvex conversion, we use a variant of the tree mechanism, as defined in Algorithm \ref{alg:tree}, to privately aggregate the sum of $\g_1^k$ and $\d_t^k$. The privacy guarantee of the tree mechanism is presented in Theorem \ref{thm:tree}, and the proof is presented in Append \ref{app:tree}. Intuitively, the tree mechanism says if each of the algorithms $\calM_1,\ldots,\calM_n$ requires $\sigma^2$ noise for privacy, then the sum $\sum_{i=1}^n\calM_i$ only needs $O(\ln(n)\sigma^2)$ noise for the same level of privacy. With Theorem \ref{thm:tree}, we determine the minimal noise required to ensure privacy for Algorithm \ref{alg:PONC} in Corollary \ref{cor:privacy}, and we evaluate the total noise added by the tree mechanism in Corollary \ref{cor:tree-noise}.

\begin{remark}
\label{rmk:tree}
To better understand the \textsc{Node} function in Algorithm \ref{alg:tree}, consider a binary tree of depth $\lceil\log_2(t)\rceil$. $\textsc{Node}(t)$ returns the largest node $n_i$ in each layer $i$, if exists, such that $n_j<n_i\le t$ starting from the top level. As an example, $\textsc{Node}(7)=\{(1,4),(5,6),(7,7)\}$ and $\textsc{Node}(8)=\{(1,8)\}$. In particular, note that $|\textsc{node}(t)| \le \lceil\log_2(t)\rceil \le 2\ln(t)$.
\end{remark}

Next, we present the main privacy theorem for the tree mechanism as presented in Algorithm \ref{alg:tree}. 

\begin{restatable}{theorem}{TreeMechanism}
\label{thm:tree}
Let $\calX$ be state space and $\calZ^{(1)},\ldots,\calZ^{(n)}$ be dataset spaces, and denote $\calZ^{(1:i)}=\calZ^{(1)}\times\dotsm\times\calZ^{(i)}$. Let $\calM^{(i)}:\calX^{i-1}\times\calZ^{(i)} \to \calX$ be a sequence of algorithms for $i\in[n]$, and let $\calA:\calZ^{(1:n)} \to \calX^n$ be the algorithm that, given a dataset $Z_{1:n} \in \calZ^{(1:n)}$, sequentially computes $\x_i = \sum_{j=1}^i\calM^{(j)}(\x_{1:j-1},Z_i)+\textsc{Tree}(i)$ for $i\in[n]$ and then outputs $\x_{1:n}$.
Suppose for all $i\in[n]$ and neighboring $Z_{1:n},Z_{1:n}'\in\calZ^{(1:n)}$, $\|\calM^{(i)}(\x_{1:i-1},Z_i)-\calM^{(i)}(\x_{1:i-1},Z_i')\| \le s_i$ for all auxiliary inputs $\x_{1:i-1}\in\calX^{i-1}$.
Then for all $\alpha>1$, $\calA$ is $(\alpha, \alpha\rho^2/2)$-RDP where
\begin{equation*}
    \rho \le \sqrt{2\ln n} \cdot \max_{b\in[n], i\le b} \frac{s_i}{\sigma_b}.
\end{equation*}
\end{restatable}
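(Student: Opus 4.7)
The plan is to construct an explicit noise-coupling showing that the outputs of $\calA$ on neighboring datasets can be made identical by shifting only $O(\ln n)$ of the Gaussian noise variables $\xi_1,\ldots,\xi_n$, and then invoke the data-processing inequality for R\'enyi divergence together with the standard Gaussian RDP formula. Fix neighboring $Z,Z' \in \calZ^{(1:n)}$ differing only at index $i$. In Algorithm~\ref{alg:tree}, each $\xi_k$ is sampled fresh at step $k$ and associated with a dyadic interval $(l_k,k)$ (the last element of $\textsc{Node}(k)$), and for every $t$ the set $\textsc{Node}(t)$ partitions $[1,t]$ into intervals of this type. Define the covering set $\calC(i) = \{k \in [n] : l_k \le i \le k\}$, consisting of indices whose noise interval contains position $i$. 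Since the covering intervals of $i$ form a nested chain in the binary tree over $[1,n]$, $|\calC(i)| \le \lceil\log_2 n\rceil + 1 \le 2\ln n$ (in the spirit of Remark~\ref{rmk:tree}).

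For the coupling, let $m_j = \calM^{(j)}(\x_{1:j-1},Z_j)$ and $m_j' = \calM^{(j)}(\x_{1:j-1},Z_j')$, both evaluated along the trajectory of the $Z$-run, so that $\|m_i - m_i'\| \le s_i$ while $m_j = m_j'$ for $j<i$. Define $\xi_k' = \xi_k + (m_i - m_i')$ for $k \in \calC(i)$ and $\xi_k' = \xi_k$ otherwise. I would then verify by induction on $b$ that running the algorithm on $(Z',\xi')$ reproduces the exact outputs $\x_1,\ldots,\x_n$ of the $(Z,\xi)$-run. The essential combinatorial point is that for $b \ge i$, $\textsc{Node}(b)$ contains exactly one index from $\calC(i)$ (the unique interval in the partition $\textsc{Node}(b)$ of $[1,b]$ that covers $i$), so the total noise shift in $\x_b$ is precisely $m_i - m_i'$, which cancels the $-(m_i - m_i')$ change in $\sum_{j=1}^b m_j'$. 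For $b < i$, $\calC(i) \cap \{k : (\cdot,k) \in \textsc{Node}(b)\} = \emptyset$, so $\x_b$ is trivially unchanged. This is the step I expect to need the most care: since $\calM^{(j)}$ is adaptive, the match of $m_j$ for $j > i$ relies on the inductive hypothesis that all earlier outputs agree---the coupling is designed precisely to maintain this self-consistency.

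Finally, by data processing, $D_\alpha(\calA(Z) \| \calA(Z')) \le D_\alpha(\xi' \| \xi)$. The shift $m_i - m_i'$ is a function of $\xi_{1:i-1}$ alone, whereas $\calC(i) \subseteq [i,n]$, so conditional on $\xi_{1:i-1}$ the shift is deterministic and acts on independent Gaussians. The Gaussian R\'enyi divergence formula thus yields
\begin{equation*}
    D_\alpha(\xi' \| \xi \mid \xi_{1:i-1}) = \sum_{k \in \calC(i)} \frac{\alpha \|m_i - m_i'\|^2}{2 \sigma_k^2} \le \frac{\alpha s_i^2\, |\calC(i)|}{2} \max_{k \in \calC(i)} \frac{1}{\sigma_k^2} \le \alpha \ln n \cdot \max_{b \in [n],\, i \le b} \frac{s_i^2}{\sigma_b^2},
\end{equation*}
and because this bound is uniform in $\xi_{1:i-1}$, the identity $e^{(\alpha-1)D_\alpha(\xi'\|\xi)} = \E_{\xi_{1:i-1}}[e^{(\alpha-1)D_\alpha(\xi'\|\xi\mid \xi_{1:i-1})}]$ transfers it to the unconditional $D_\alpha$. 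Taking a maximum over $i \in [n]$ gives $D_\alpha(\calA(Z)\|\calA(Z')) \le \alpha\rho^2/2$ with $\rho = \sqrt{2\ln n}\cdot\max_{b\in[n],\,i\le b} s_i/\sigma_b$, as claimed.
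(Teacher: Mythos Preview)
Your coupling argument is correct and yields the stated bound, but it is a genuinely different route from the paper's proof. The paper does not couple the noise directly; instead it introduces an auxiliary family of mechanisms $\calR^{(a,b)}$, one for each node $(a,b)$ of the binary tree, where $\calR^{(a,b)}$ releases $\sum_{j=a}^b\calM^{(j)}(\x_{1:j-1},Z_j)+\xi_b$. It then proves a general adaptive RDP composition lemma stating that if $Z\simeq_q Z'$, only those $\calR^{(a,b)}$ with $q\in[a,b]$ contribute to the R\'enyi divergence; since there is exactly one such node per layer, this is $O(\ln n)$ terms. Finally it observes that $\calA$ is a post-processing of the full composition $(\calR^{(a,b)})_{(a,b)\in\sT}$.

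Your argument avoids the auxiliary mechanisms entirely by working at the level of the raw noises $\xi_1,\ldots,\xi_n$ and exploiting the combinatorial fact (which the paper does not isolate) that every interval appearing in any $\textsc{Node}(t)$ is of the canonical form $(l_k,k)$, and that at most $\lceil\log_2 n\rceil+1$ of these contain a fixed index $i$. This makes the proof shorter and more self-contained; the paper's approach, by contrast, packages the argument through a reusable composition lemma (its Lemma~\ref{lem-aux:RDP-composition}). Both ultimately reduce to ``only $O(\ln n)$ Gaussian mechanisms see the change,'' but your shifted-coupling plus data-processing step replaces the paper's explicit integration over the tree-indexed composition. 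One small point to make explicit when you write it out: since the $\calM^{(j)}$ may carry their own internal randomness (as in Algorithms~\ref{alg:first-grad}--\ref{alg:zeroth-diff}), your coupling should also fix that randomness identically across the two runs, so that the sensitivity bound $\|m_i-m_i'\|\le s_i$ applies pointwise.
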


In our case, $\calX$ is the collection of weights $\x$ for $f(\x,z)$, $\calZ^{(1)}$ is the collection of all possible batches $Z_1^k$ of size $B_1$ defined in Algorithm \ref{alg:PONC} and $\calZ^{(t)}$ for $t\ge 2$ is the collection of all possible batches $Z_t^k$ of size $B_2$. Moreover, $\calM^{(1)}$ corresponds to $\grad$ that computes $\g_1^k$ and $\calM^{(t)}$ corresponds to $\diff$ that computes $\d_t^k$. Upon substituting these specific definition into Theorem \ref{thm:tree}, we have the following privacy guarantees.

\begin{restatable}{corollary}{TreeCorollary}
\label{cor:privacy}
Suppose $f(\x,z)$ is differentiable and $L$-Lipschitz in $\x$ and the domain of $\calA$ is bounded by $D={\delta}/{T}$. Let $B_1,B_2$ satisfies $B_1\ge TB_2/2$. Then for any $\alpha>1, \rho>0$, Algorithm \ref{alg:PONC} is $(\alpha,\alpha\rho^2/2)$-RDP if we set the noises $\sigma_{1:T}$ in the tree mechanism as
\begin{equation*}
    \sigma_t = \sigma := \frac{\sqrt{2\ln T}4dL}{B_2T\rho}.
\end{equation*}
\end{restatable}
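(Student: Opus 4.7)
The plan is to apply Theorem~\ref{thm:tree} to one outer iteration of Algorithm~\ref{alg:PONC}, and then extend the guarantee to all $K$ outer iterations via parallel composition. The crucial observation that makes this decomposition possible is the dataset partitioning in Algorithm~\ref{alg:oracle-variance-reduced}: the $K$ outer iterations operate on disjoint batches $\{Z_1^k, \ldots, Z_T^k\}_{k\in[K]}$, so any two neighboring datasets $\calZ, \calZ'$ differ in exactly one batch $Z_{t^*}^{k^*}$, which only affects the computation inside outer iteration $k^*$. Subsequent outer iterations use fresh, unchanged data, so conditioned on the weights produced by iteration $k^*$, their outputs have identical distributions under the two neighboring datasets. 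By post-processing and parallel composition for R\'enyi divergences, it therefore suffices to prove that a single outer loop is $(\alpha, \alpha\rho^2/2)$-RDP.

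Next, I would instantiate Theorem~\ref{thm:tree} with $n=T$, identifying $\calM^{(1)}(\cdot, Z_1^k) = \grad_{f,\delta}(\w_1^k, Z_1^k)$ and, for $t \ge 2$, $\calM^{(t)}(\w_{1:t-1}, Z_t^k) = \diff_{f,\delta}(\w_t^k, \w_{t-1}^k, Z_t^k)$ — the history of previous outputs (together with the fixed randomness $s_t^k$ and $\Delta_t^k$ from the OCO algorithm) determines the query points $\w_t^k, \w_{t-1}^k$. The released $\x_t$ in Theorem~\ref{thm:tree} then corresponds exactly to the oracle output $\tilde\g_t^k$, which equals the running sum $\g_1^k + \sum_{i=2}^t \d_i^k$ plus the tree noise.

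The next step is to bound the sensitivities $s_t$. Lemma~\ref{lem:grad-estimator} gives $s_1 \le 2dL/B_1$, and Lemma~\ref{lem:diff-estimator} gives $s_t \le 2dL\|\w_t^k-\w_{t-1}^k\|/(B_2\delta)$ for $t\ge 2$. Applying Remark~\ref{rmk:w-distance} with $D = \delta/T$ yields $\|\w_t^k - \w_{t-1}^k\| \le 2\delta/T$, hence $s_t \le 4dL/(B_2 T)$ for $t\ge 2$. The assumption $B_1 \ge TB_2/2$ ensures $s_1 \le 4dL/(TB_2)$ as well, so $\max_{t\in[T]} s_t \le 4dL/(B_2T)$.

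Finally, substituting $\sigma_b = \sigma = \sqrt{2\ln T}\cdot 4dL/(B_2 T \rho)$ into the bound from Theorem~\ref{thm:tree} gives
\begin{equation*}
\sqrt{2\ln T}\cdot\max_{b\in[T],\,i\le b}\frac{s_i}{\sigma_b} \;\le\; \sqrt{2\ln T}\cdot\frac{4dL/(B_2T)}{\sqrt{2\ln T}\cdot 4dL/(B_2T\rho)} \;=\; \rho,
\end{equation*}
establishing the claimed $(\alpha, \alpha\rho^2/2)$-RDP guarantee. The main subtle point is the parallel composition argument at the start — ensuring that the state carried across outer loops (initial iterate reset, OCO restart) genuinely leaves subsequent iterations as post-processing of iteration $k^*$'s output from the perspective of a fixed neighboring pair of datasets. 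Everything else reduces to clean algebra with the sensitivity bounds already established in Lemmas~\ref{lem:grad-estimator}--\ref{lem:diff-estimator} and Remark~\ref{rmk:w-distance}.
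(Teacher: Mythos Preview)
Your proposal is correct and follows essentially the same route as the paper: reduce to a single outer loop via disjoint-data/post-processing, cast $(\tilde\g_1^k,\ldots,\tilde\g_T^k)$ as an instance of Theorem~\ref{thm:tree} with $\calM^{(1)}=\grad$ and $\calM^{(t)}=\diff$, bound $s_1\le 2dL/B_1$ and $s_t\le 4dL/(B_2T)$ via Lemmas~\ref{lem:grad-estimator}--\ref{lem:diff-estimator} and Remark~\ref{rmk:w-distance}, use $B_1\ge TB_2/2$ to make $s_1$ the smaller one, and plug in $\sigma$. One small notational point: in Theorem~\ref{thm:tree} the auxiliary inputs to $\calM^{(t)}$ are the released sums $\x_{1:t-1}=\tilde\g_{1:t-1}^k$, not $\w_{1:t-1}$ directly, so you should frame $\calM^{(t)}$ as a function of $\tilde\g_{1:t-1}^k$ (from which $\w_t^k,\w_{t-1}^k$ are post-processed), exactly as you hint at in your final paragraph.
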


\begin{restatable}{corollary}{TreeNoise}
\label{cor:tree-noise}
Following the assumptions and definitions in Corollary \ref{cor:privacy}, for all $t\in [T]$,
\begin{equation*}
    \E\|\textsc{Tree}(t)\|^2 \le 2\ln(t) d\sigma^2 \le \left( \frac{8\ln(T) d^{3/2}L}{B_2T\rho} \right)^2.
\end{equation*}
\end{restatable}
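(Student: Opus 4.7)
The plan is to expand the $\ell_2^2$ norm of a sum of independent centered Gaussian vectors, use the bound on $|\textsc{Node}(t)|$ from Remark~\ref{rmk:tree}, and then substitute the explicit value of $\sigma$ supplied by Corollary~\ref{cor:privacy}.

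Concretely, by definition of the tree mechanism,
\begin{equation*}
    \textsc{Tree}(t) = \sum_{(\cdot,i)\in\textsc{Node}(t)} \xi_i,
    \qquad \xi_i \sim N(0,\sigma^2 I_d) \text{ i.i.d.}
\end{equation*}
Since the $\xi_i$ are independent and zero-mean, the cross terms vanish when expanding $\|\cdot\|^2$, so
\begin{equation*}
    \E\|\textsc{Tree}(t)\|^2
    = \sum_{(\cdot,i)\in\textsc{Node}(t)} \E\|\xi_i\|^2
    = |\textsc{Node}(t)| \cdot d\sigma^2.
\end{equation*}
The first inequality in the corollary then follows from the bound $|\textsc{Node}(t)| \le \lceil\log_2(t)\rceil \le 2\ln(t)$ stated in Remark~\ref{rmk:tree}.

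For the second inequality, I would plug in the choice $\sigma = \frac{\sqrt{2\ln T}\,4dL}{B_2 T \rho}$ from Corollary~\ref{cor:privacy}, giving
\begin{equation*}
    2\ln(t)\, d\sigma^2 \;=\; 2\ln(t)\, d \cdot \frac{32\ln(T)\,d^2 L^2}{B_2^2 T^2 \rho^2} \;=\; \frac{64\ln(t)\ln(T)\, d^3 L^2}{B_2^2 T^2 \rho^2},
\end{equation*}
and then use $\ln(t) \le \ln(T)$ (since $t \le T$) to match $\left(\frac{8\ln(T)\,d^{3/2}L}{B_2 T \rho}\right)^2 = \frac{64\ln^2(T)\,d^3 L^2}{B_2^2 T^2 \rho^2}$.

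There is no real obstacle here; the content is essentially an accounting step, once one recognizes that the Gaussian components in the tree are independent and have a logarithmic-size support set. The only mild subtlety is to verify the constant in $\lceil\log_2(t)\rceil \le 2\ln(t)$, which is immediate for $t\ge 2$ (and trivial for $t=1$ since then $\textsc{Tree}(1)=\xi_1$ has norm-squared expectation $d\sigma^2$).
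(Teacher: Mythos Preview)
Your proposal is correct and essentially identical to the paper's own proof: both expand $\E\|\textsc{Tree}(t)\|^2$ using independence of the Gaussian noises, invoke Remark~\ref{rmk:tree} for $|\textsc{Node}(t)|\le 2\ln(t)$, and then substitute the value of $\sigma$ from Corollary~\ref{cor:privacy}. You even spell out the final numerical substitution and the $\ln(t)\le\ln(T)$ step more explicitly than the paper does.
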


\subsection{Convergence Analysis}
\label{sec:convergence}

To start the convergence analysis, we revisit the convergence result of the non-private O2NC algorithm as presented in Lemma \ref{lem:O2NC-base} whose proof is included in Appendix \ref{app:base-lemma} for completeness. Building upon the observation in Corollary \ref{cor:smooth-reduction}, our objective shifts to finding a $(\delta,\epsilon)$-stationary point of the uniform smoothing $\hat F_\delta$, as opposed to directly optimizing $F$. This approach leads to the formulation of Corollary \ref{cor:O2NC-smoothing}. Missing proofs in this section are deferred to Appendix \ref{app:base-lemma}.

\begin{restatable}{lemma}{BaseLemma}
\label{lem:O2NC-base}
For any function $F:\RR^d\to\RR$ that is differentiable and $F(\x_1)-\inf_{\x} F(\x) \le F^*$, if the domain of the OCO algorithm $\calA$ is bounded by $D=\delta/T$, then 
\begin{equation*}
    \E\|\nabla F(\overline\w)\|_\delta 
    \le \frac{F^*}{DKT} + \sum_{k=1}^K \frac{\E\regret_T(\u^k)}{DKT} + \sum_{k=1}^K\sum_{t=1}^T \frac{ \E\langle \nabla F(\w_t^k) - \tilde\g_t^k, \Delta_t^k - \u^k \rangle }{DKT}.
\end{equation*}
where $\u^k = -D\frac{\sum_{t=1}^T \nabla F(\w_t^k)}{\|\sum_{t=1}^T \nabla F(\w_t^k)\|}$.
\end{restatable}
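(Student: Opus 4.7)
The plan is to exploit the randomization $s_t^k\sim\mathrm{Uniform}([0,1])$ hidden in the definition of $\w_t^k$. Because $\w_t^k = \x_t^k + s_t^k\Delta_t^k$ and $\x_{t+1}^k = \x_t^k + \Delta_t^k$, the fundamental theorem of calculus gives the key ``online-to-non-convex'' identity
\begin{equation*}
    \E_{s_t^k}\bigl[\langle \nabla F(\w_t^k), \Delta_t^k \rangle\bigr]
    = \int_0^1 \langle \nabla F(\x_t^k + s\Delta_t^k), \Delta_t^k \rangle\, ds
    = F(\x_{t+1}^k) - F(\x_t^k),
\end{equation*}
which converts an inner product with the true gradient into an honest function-value decrement.

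Summing this identity over $t\in[T]$ telescopes within each outer loop, and summing over $k\in[K]$ telescopes further because $\x_1^{k+1} = \x_{T+1}^k$, giving $\sum_{k,t}\E\langle \nabla F(\w_t^k),\Delta_t^k\rangle = \E[F(\x_{T+1}^K)] - F(\x_1^1) \ge -F^*$. For each $k$ I would then write $\Delta_t^k = (\Delta_t^k-\u^k)+\u^k$ and add/subtract $\tilde\g_t^k$ to obtain
\begin{equation*}
    \sum_{t=1}^T \langle \nabla F(\w_t^k), \Delta_t^k\rangle
    = \Bigl\langle \sum_{t=1}^T\nabla F(\w_t^k),\, \u^k\Bigr\rangle
    + \regret_T(\u^k)
    + \sum_{t=1}^T \langle \nabla F(\w_t^k) - \tilde\g_t^k, \Delta_t^k - \u^k\rangle.
\end{equation*}
The specific choice $\u^k = -D\sum_t\nabla F(\w_t^k)/\|\sum_t\nabla F(\w_t^k)\|$ collapses the first bracket into $-D\|\sum_t\nabla F(\w_t^k)\|$. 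Combining the two displays, taking expectations, and dividing by $DKT$ makes the remaining left-hand side equal $\frac{1}{K}\sum_k \E\|\frac{1}{T}\sum_t\nabla F(\w_t^k)\|$, while the right-hand side is exactly what the lemma claims.

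It remains to upgrade $\|\frac{1}{T}\sum_t \nabla F(\w_t^k)\|$ to $\|\nabla F(\overline\w^k)\|_\delta$. Because $D=\delta/T$, the inner trajectory $\x_1^k\to\x_2^k\to\cdots\to\x_{T+1}^k$ has polygonal length at most $TD=\delta$, so every $\w_t^k$ and $\overline\w^k$ sit inside a common set of diameter at most $\delta$; hence $\w_t^k\in B(\overline\w^k,\delta)$ for each $t$, and Definition~\ref{def:stationary} applied to $S=\{\w_1^k,\ldots,\w_T^k\}$ yields $\|\frac{1}{T}\sum_t\nabla F(\w_t^k)\|\ge \|\nabla F(\overline\w^k)\|_\delta$. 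Averaging over the uniform output draw $\overline\w$ then closes the argument. I expect this last geometric step -- carefully verifying containment in the $\delta$-ball via a triangle inequality along the polygonal path -- to be the main obstacle, since it is precisely what pins down the scaling $D=\delta/T$; the preceding telescoping and comparator manipulations are routine.
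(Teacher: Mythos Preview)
Your proposal is correct and follows essentially the same route as the paper: the fundamental-theorem-of-calculus identity via the uniform $s_t^k$, telescoping across $t$ and $k$, the add/subtract of $\tilde\g_t^k$ and the comparator $\u^k$, and the final geometric observation that $\|\w_t^k-\overline\w^k\|\le TD=\delta$ so that Definition~\ref{def:stationary} applies. The paper states the last step more tersely (``note that $\|\w_t^k-\overline\w^k\|\le DT=\delta$''), while you justify it via the polygonal-path diameter; both are valid.
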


\begin{restatable}{corollary}{BaseCorollary}
\label{cor:O2NC-smoothing}
Suppose $F:\RR^d\to\RR$ is differentiable, $L$-Lipschitz, and $F(\x_1)-\inf_\x F(\x)\le F^*$, and suppose the domain of $\calA$ is bounded by $D=\delta/T$. Then
\begin{equation*}
    \E\|\nabla F(\overline \w)\|_{2\delta} \le \frac{F^*+2L\delta}{DKT} + \sum_{k=1}^K\frac{\E\regret_T(\hat\u^k)}{DKT} + \sum_{k=1}^K\sum_{t=1}^T \frac{ \E\langle \nabla\hat F_\delta(\w_t^k)-\tilde\g_t^k, \Delta_t^k-\hat\u^k\rangle }{DKT}.
\end{equation*}
where $\hat\u^k = -D\frac{\sum_{t=1}^T\nabla\hat F_\delta(\w_t^k)}{\|\sum_{t=1}^T\nabla\hat F_\delta(\w_t^k)\|}$.
\end{restatable}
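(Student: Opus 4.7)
The plan is to apply Lemma \ref{lem:O2NC-base} with $F$ replaced by its uniform smoothing $\hat F_\delta$, and then translate the resulting bound on $\|\nabla \hat F_\delta(\overline\w)\|_\delta$ into a bound on $\|\nabla F(\overline\w)\|_{2\delta}$ via Corollary \ref{cor:smooth-reduction}. This is natural because (i) our gradient oracle $\tilde\g_t^k$ is (in expectation) an estimator of $\nabla \hat F_\delta$, not $\nabla F$, and (ii) Lemma \ref{lem:uniform-smoothing} ensures $\hat F_\delta$ is differentiable even though $F$ may not be, so the hypotheses of Lemma \ref{lem:O2NC-base} hold for $\hat F_\delta$.

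First I would verify the initial-suboptimality bound needed for Lemma \ref{lem:O2NC-base}. By part (ii) of Lemma \ref{lem:uniform-smoothing}, $|\hat F_\delta(\x) - F(\x)| \le L\delta$ for every $\x$, so
\begin{equation*}
    \hat F_\delta(\x_1) - \inf_\x \hat F_\delta(\x) \;\le\; \bigl(F(\x_1) + L\delta\bigr) - \bigl(\inf_\x F(\x) - L\delta\bigr) \;\le\; F^* + 2L\delta.
\end{equation*}
Combined with differentiability of $\hat F_\delta$ from Lemma \ref{lem:uniform-smoothing}(iii), we can directly invoke Lemma \ref{lem:O2NC-base} applied to $\hat F_\delta$, which yields exactly the right-hand side of the claimed inequality with $\u^k$ specialized to $\hat\u^k = -D\sum_{t=1}^T\nabla\hat F_\delta(\w_t^k)/\|\sum_{t=1}^T\nabla\hat F_\delta(\w_t^k)\|$.

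Finally I would use Corollary \ref{cor:smooth-reduction} to replace the resulting bound on $\E\|\nabla\hat F_\delta(\overline\w)\|_\delta$ by a bound on $\E\|\nabla F(\overline\w)\|_{2\delta}$. The implication ``$\|\nabla\hat F_\delta(\x)\|_\delta \le \epsilon \Rightarrow \|\nabla F(\x)\|_{2\delta} \le \epsilon$'' is equivalent to the pointwise inequality $\|\nabla F(\x)\|_{2\delta} \le \|\nabla\hat F_\delta(\x)\|_\delta$, and taking expectations preserves this. Chaining the two inequalities gives the stated corollary.

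The argument is essentially a bookkeeping step on top of Lemma \ref{lem:O2NC-base}, so there is no real obstacle; the only point that requires modest care is tracking the additive $2L\delta$ overhead arising from the function-value gap $|\hat F_\delta - F| \le L\delta$ applied twice (once at $\x_1$ and once at the infimum), and being explicit that the comparator $\hat\u^k$ is the correct comparator produced by Lemma \ref{lem:O2NC-base} when it is invoked on $\hat F_\delta$ rather than $F$.
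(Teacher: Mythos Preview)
Your proposal is correct and follows essentially the same route as the paper: apply Lemma~\ref{lem:O2NC-base} to $\hat F_\delta$ (using Lemma~\ref{lem:uniform-smoothing} for differentiability and the $2L\delta$ suboptimality overhead), then invoke Corollary~\ref{cor:smooth-reduction} to pass from $\|\nabla\hat F_\delta(\overline\w)\|_\delta$ to $\|\nabla F(\overline\w)\|_{2\delta}$.
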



Next, we introduce a key result in our convergence analysis. At its core, Lemma \ref{lem:oracle-variance} suggests that the variance of the non-private gradient oracle, defined as $\g_t^k$ in Algorithm \ref{alg:oracle-variance-reduced}, is approximately $O(dL^2/T)$. This result leads to the non-private term of $O(d\delta^{-1}\epsilon^{-3})$ in the data complexity, matching the optimal rate in non-private contexts. Together with Corollary \ref{cor:tree-noise}, this lemma implies the main result as stated in Theorem \ref{thm:convergence}.

\begin{restatable}{lemma}{VarianceLemma}
\label{lem:oracle-variance}
Suppose $f(\x,z)$ is differentiable and $L$-Lipschitz in $\x$ and the domain of $\calA$ is bounded by $D={\delta}/{T}$, then the variance of the gradient oracle $\calO$ (Algorithm \ref{alg:oracle-variance-reduced}) is bounded by
\begin{equation*}
    \E\| \nabla\hat F_\delta(\w_t^k) - \g_t^k \|^2 \le \frac{16dL^2}{B_1} + \frac{64dL^2}{B_2T}.
\end{equation*}
\end{restatable}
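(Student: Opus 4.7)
The plan is to exploit the telescoping structure of the variance-reduced oracle together with the martingale-difference property of the estimator noises. Specifically, by construction $\g_t^k = \g_1^k + \sum_{i=2}^t \d_i^k$, while the true gradient trivially telescopes as $\nabla\hat F_\delta(\w_t^k) = \nabla\hat F_\delta(\w_1^k) + \sum_{i=2}^t \bigl[\nabla\hat F_\delta(\w_i^k) - \nabla\hat F_\delta(\w_{i-1}^k)\bigr]$. Subtracting, I would define the noise terms
\begin{equation*}
    \epsilon_1 = \g_1^k - \nabla\hat F_\delta(\w_1^k), \qquad \epsilon_i = \d_i^k - \bigl[\nabla\hat F_\delta(\w_i^k) - \nabla\hat F_\delta(\w_{i-1}^k)\bigr] \quad (i\ge 2),
\end{equation*}
so that $\g_t^k - \nabla\hat F_\delta(\w_t^k) = \sum_{i=1}^t \epsilon_i$.

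Next I would argue that $\{\epsilon_i\}$ is a martingale difference sequence with respect to the natural filtration $\calF_i$ generated by the randomness through iteration $i$. The key observation is that each $\d_i^k$ uses (a) a fresh data batch $Z_i^k$ that is disjoint from all previously used batches, and (b) freshly sampled direction vectors $\u_{11},\ldots,\u_{B_2 d}$; conditioned on $\calF_{i-1}$ (which determines $\w_{i-1}^k$ and $\w_i^k$), the unbiasedness claim in Lemma~\ref{lem:diff-estimator} gives $\E[\epsilon_i \mid \calF_{i-1}] = 0$. The same argument applies to $\epsilon_1$ via Lemma~\ref{lem:grad-estimator}. Orthogonality of martingale differences then yields
\begin{equation*}
    \E\|\g_t^k - \nabla\hat F_\delta(\w_t^k)\|^2 = \sum_{i=1}^t \E\|\epsilon_i\|^2.
\end{equation*}

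Finally, I would bound each term using the variance bounds already proved. Lemma~\ref{lem:grad-estimator} gives $\E\|\epsilon_1\|^2 \le 16dL^2/B_1$. For $i\ge 2$, Lemma~\ref{lem:diff-estimator} (applied conditionally on $\calF_{i-1}$, then taking outer expectation) yields $\E\|\epsilon_i\|^2 \le \frac{16dL^2}{B_2\delta^2}\,\E\|\w_i^k - \w_{i-1}^k\|^2$. Invoking Remark~\ref{rmk:w-distance} with $D = \delta/T$ gives $\|\w_i^k - \w_{i-1}^k\| \le 2\delta/T$ almost surely, so $\E\|\epsilon_i\|^2 \le 64dL^2/(B_2 T^2)$. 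Summing over $i = 2,\ldots,t$ and using $t \le T$ gives the second term $64dL^2/(B_2 T)$, completing the bound.

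The only subtle point is the conditional-independence argument that makes the noises a genuine martingale difference sequence: one must be explicit that $\w_i^k$ is $\calF_{i-1}$-measurable (it is computed from past oracle outputs), and that the random directions and data batch used at step $i$ are drawn fresh and independently of $\calF_{i-1}$. Once that is in place, the rest is a routine application of the previously established estimator lemmas and the domain-boundedness remark.
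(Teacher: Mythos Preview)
Your proposal is correct and follows essentially the same approach as the paper: telescope the error as a sum of martingale differences, use orthogonality to decouple the variance into a sum, then bound each summand via Lemmas~\ref{lem:grad-estimator} and~\ref{lem:diff-estimator} together with the distance bound $\|\w_i^k-\w_{i-1}^k\|\le 2\delta/T$ from Remark~\ref{rmk:w-distance}. If anything, you are more careful than the paper in spelling out the filtration and the conditional-independence justification; just make sure your $\calF_{i-1}$ is defined to include $s_i^k$ (so that $\w_i^k$ is indeed $\calF_{i-1}$-measurable) while excluding the fresh batch $Z_i^k$ and direction vectors drawn inside the oracle at step $i$.
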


\begin{theorem}
\label{thm:convergence}
Suppose $f(\x,z)$ is differentiable and $L$-Lipschitz in $\x$, $F(\x_1)-\inf_\x F(\x) \le F^*$. Then for any $\delta,\epsilon>0$ and $\alpha>1,\rho>0$, there exists 
\begin{equation*}
    M = \tilde O\left( d\left(\frac{F^*L^2}{\delta\epsilon^3} + \frac{L^3}{\epsilon^3}\right) + d^{3/2}\left(\frac{F^*L}{\rho\delta\epsilon^2} + \frac{L^2}{\rho\epsilon^2}\right) \right)
\end{equation*}
such that upon running Algorithm \ref{alg:PONC} with projected OSD as the OCO algorithm, using Algorithm \ref{alg:oracle-variance-reduced} as the private oracle, and setting $\sigma_t$ as defined in Corollary \ref{cor:privacy} and $B_1=T+1, B_2=1, D=\frac{\delta}{T}, T=\min\{(\frac{\sqrt{d}L\delta M}{F^*+L\delta})^{2/3}, (\frac{d^{3/2}L\delta M}{(F^*+L\delta)\rho})^{1/2}\}, K=\frac{M}{2T}$, the algorithm outputs an $(\alpha,\alpha\rho^2/2)$-RDP $(\delta,\epsilon)$-stationary point with $M$ data points. 
\end{theorem}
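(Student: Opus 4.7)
The plan is to combine Corollary~\ref{cor:privacy} for the privacy guarantee, Corollary~\ref{cor:O2NC-smoothing} for the convergence decomposition, Lemma~\ref{lem:oracle-variance} for the non-private oracle variance, and Corollary~\ref{cor:tree-noise} for the tree-mechanism noise, then balance the resulting error terms by choosing $T$ (and hence $K=M/(2T)$) appropriately. Privacy is essentially immediate: with $B_1=T+1\ge TB_2/2$ and $\sigma_t$ as specified, Corollary~\ref{cor:privacy} directly yields $(\alpha,\alpha\rho^2/2)$-RDP, so the rest of the proof is about establishing the $(\delta,\epsilon)$-stationary guarantee.

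For convergence, Corollary~\ref{cor:O2NC-smoothing} bounds $\E\|\nabla F(\overline\w)\|_{2\delta}$ by the sum of an optimization term $(F^*+2L\delta)/(DKT)$, an average regret term, and a bias term $\sum_{k,t}\E\langle \nabla\hat F_\delta(\w_t^k)-\tilde\g_t^k,\Delta_t^k-\hat\u^k\rangle/(DKT)$. I would bound the bias term by Cauchy--Schwarz using $\|\Delta_t^k-\hat\u^k\|\le 2D$ together with the decomposition $\tilde\g_t^k-\nabla\hat F_\delta(\w_t^k) = (\g_t^k-\nabla\hat F_\delta(\w_t^k)) + \textsc{Tree}(t)$. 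With $B_1=T+1,B_2=1$, Lemma~\ref{lem:oracle-variance} bounds the variance of the first piece by $O(dL^2/T)$, and Corollary~\ref{cor:tree-noise} bounds that of $\textsc{Tree}(t)$ by $\tilde O(d^3L^2/(T^2\rho^2))$. Taking square roots, summing over $k,t$, and dividing by $DKT$ yields a bias contribution of order $\tilde O(\sqrt{d}L/\sqrt{T}+d^{3/2}L/(T\rho))$.

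For the regret term I would invoke the projected OSD bound $\E\regret_T(\u)\le D\sqrt{\sum_t\E\|\tilde\g_t^k\|^2}$, with $\E\|\tilde\g_t^k\|^2\le 2L^2 + 2\E\|\g_t^k-\nabla\hat F_\delta(\w_t^k)\|^2 + 2\E\|\textsc{Tree}(t)\|^2$ since $\nabla\hat F_\delta$ is $L$-Lipschitz. Reusing the same variance bounds, the regret contribution after dividing by $DKT$ is dominated by the bias term. Substituting $D=\delta/T$ and $K=M/(2T)$, the total error becomes
\begin{equation*}
\E\|\nabla F(\overline\w)\|_{2\delta} \lesssim \frac{T(F^* + L\delta)}{\delta M} + \frac{\sqrt{d}L}{\sqrt{T}} + \frac{d^{3/2}L\ln T}{T\rho}.
\end{equation*}

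To finish, I would force each of the three terms below $\epsilon$. The second and third give $T\gtrsim dL^2/\epsilon^2$ and $T\gtrsim \tilde\Omega(d^{3/2}L/(\rho\epsilon))$, respectively; these correspond to the two expressions inside the $\min$ in the theorem's choice of $T$ once $M$ is substituted. The first term then demands $M\gtrsim (F^*+L\delta)T/(\delta\epsilon)$, and plugging in the two lower bounds on $T$ produces the two summands $d(F^*L^2+L^3\delta)/(\delta\epsilon^3)$ and $d^{3/2}(F^*L+L^2\delta)/(\rho\delta\epsilon^2)$, which match the claimed $M$. The main obstacle I expect is the careful bookkeeping in the bias/regret decomposition: one must verify that $\E\|\tilde\g_t^k\|^2$ entering the OSD regret does not blow up through the $\sigma^2\propto 1/\rho^2$ tree noise, and that the variance-reduced estimators $\grad$ and $\diff$ combine via Lemma~\ref{lem:oracle-variance} to give the $O(dL^2/T)$ non-private variance that is essential for matching the state-of-the-art non-private dimension dependence.
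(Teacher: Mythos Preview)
Your proposal is correct and follows essentially the same approach as the paper's proof: invoke Corollary~\ref{cor:privacy} for privacy, Corollary~\ref{cor:O2NC-smoothing} for the decomposition, bound the bias term via $\|\Delta_t^k-\hat\u^k\|\le 2D$ together with Lemma~\ref{lem:oracle-variance} and Corollary~\ref{cor:tree-noise}, bound the regret term via the OSD guarantee and the same variance estimates, and then balance $T$ against $M$. The only cosmetic difference is that the paper sets $T$ explicitly as the $\min$ of the two balancing values and reads off the resulting rate, whereas you solve for $T$ and $M$ by forcing each error term below $\epsilon$; both yield the same sample complexity.
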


\begin{proof}
Since $B_1=T+1,B_2=1$ satisfy $B_1\ge TB_2/2$, Corollary \ref{cor:privacy} implies the privacy guarantee.
For the convergence analysis, Corollary \ref{cor:O2NC-smoothing} states that
\begin{equation*}
    \E\|\nabla F(\overline \w)\|_{2\delta} 
    \le \frac{F^*+2L\delta}{DKT} + \sum_{k=1}^K\frac{\E\regret_T(\hat\u^k)}{DKT} + \sum_{k=1}^K\sum_{t=1}^T \frac{ \E\langle \nabla\hat F_\delta(\w_t^k)-\tilde\g_t^k, \Delta_t^k-\hat\u^k\rangle }{DKT}.
\end{equation*}
Recall that given a sequence of stochastic vectors $\v_1,\ldots,\v_T$, the regret of projected OSD is bounded by $\E[\regret_T(\u)] \le D\sqrt{\sum_{t=1}^T\E\|\v_t\|^2}$ \citep{orabona2019modern}. In our case, $\v_t=\tilde\g_t^k$. We can bound
\begin{align*}
    \E\|\tilde\g_t^k\|^2
    &\le 3\E\|\g_t^k - \nabla\hat F_\delta(\w_t^k)\|^2 + 3\E\|\nabla\hat F_\delta(\w_t^k)\|^2 + 3\E\|\tree(t)\|^2 \\
    &\le 3\cdot \frac{80dL^2}{B_2T} + 3L^2 + 3\cdot\left(\frac{8\ln(T)d^{3/2}L}{B_2T\rho}\right)^2,
\end{align*}
where the first term is bounded by Lemma \ref{lem:oracle-variance} with $B_1\ge TB_2/2$, the second by Lipschitzness of $\hat F_\delta$ (Lemma \ref{lem:uniform-smoothing} (i)), and the third by Corollary \ref{cor:tree-noise}. Consequently, we bound the regret by
\begin{equation}
    \E[\regret_T(\hat\u^k)]
    \lesssim DL\sqrt{T}\left( \frac{\sqrt{d}}{\sqrt{B_2T}} + 1 + \frac{d^{3/2}}{B_2T\rho} \right).
    \label{eq:convergence-1}
\end{equation}
Next, note that $\|\Delta_t^k-\hat\u^k\| \le 2D$. Following the same previous bounds, we have
\begin{align}
    \E\langle \nabla\hat F_\delta(\w_t^k)-\tilde\g_t^k, \Delta_t^k-\hat\u^k\rangle
    &\le 2D\E[\|\nabla\hat F_\delta(\w_t^k)-\g_t^k\| + \|\tree(t)\|] \notag \\
    &\lesssim D\left( \frac{\sqrt{d}L}{\sqrt{B_2T}} + \frac{d^{3/2}L}{B_2T\rho} \right).
    \label{eq:convergence-2}
\end{align}
Upon substituting \eqref{eq:convergence-1} and \eqref{eq:convergence-2} into Corollary \ref{cor:O2NC-smoothing}, we have
\begin{align*}
    \E\|\nabla F(\overline \w)\|_{2\delta} 
    &\lesssim \frac{F^*+L\delta}{DKT} + \frac{L}{\sqrt{T}}\left(\frac{\sqrt{d}}{\sqrt{B_2T}} + 1+ \frac{d^{3/2}}{B_2T\rho} \right) + \left( \frac{\sqrt{d}L}{\sqrt{B_2T}} + \frac{d^{3/2}L}{B_2T\rho} \right) 
    \intertext{Upon setting $B_1=T+1,B_2=1,D=\frac{\delta}{T}$, the data size is $M=K(B_1+B_2(T-1))=2KT$ and}
    &\lesssim \frac{(F^*+L\delta)T}{\delta M} + \frac{\sqrt{d}L}{\sqrt{T}} + \frac{d^{3/2}L}{T\rho}
    \intertext{Upon setting $T=\min\{(\frac{\sqrt{d}L\delta M}{F^*+L\delta})^{2/3}, (\frac{d^{3/2}L\delta M}{(F^*+L\delta)\rho})^{1/2}\}$, we achieve the desired bound}
    &\lesssim \left(\frac{F^*dL^2}{\delta M}\right)^{1/3} + \left(\frac{dL^3}{M}\right)^{1/3} + \left(\frac{F^*d^{3/2}L}{\rho\delta M}\right)^{1/2} + \left(\frac{d^{3/2}L^2}{\rho M}\right)^{1/2}.
\end{align*}
Equivalently, the right hand side is bounded by $\epsilon$ if $M=\max\{\frac{F^*dL^2}{\delta\epsilon^3}, \frac{dL^3}{\epsilon^3}, \frac{F^*d^{3/2}L}{\rho\delta\epsilon^2}, \frac{d^{3/2}L^2}{\rho\epsilon^2}\}$.
\end{proof}
\section{Conclusion}

This paper presents a novel zeroth-order algorithm for private nonsmooth nonconvex optimization. We prove that, given a dataset of size $M$, our algorithm finds a $(\delta,\epsilon)$-stationary point while achieving $(\alpha,\alpha\rho^2/2)$-RDP privacy guarantee so long as $M=\tilde\Omega \left( d(\frac{F^*L^2}{\delta\epsilon^3} + \frac{L^3}{\epsilon^3}) + d^{3/2}(\frac{F^*L}{\rho\delta\epsilon^2} + \frac{L^2}{\rho\epsilon^2}) \right)$. Notably, this is the first algorithm for the private nonsmooth nonconvex optimization problem.

For future research, there are several intriguing directions. First, we are interested in exploring methods that could eliminate the need to sample $d$ uniform vectors per data point, a limitation we delve into in Section \ref{sec:estimator}. Additionally, the design of efficient first-order private algorithms for nonsmooth nonconvex optimization stands out as an area of potential exploration. Finally, the optimality of the current rate is still uncertain, and finding the lower bounds remains an open problem.

\bibliography{references}
\bibliographystyle{iclr2024_conference}

\newpage
\appendix
\section{Proofs in Section \ref{sec:pre}}
\label{app:pre}

\subsection{Proof of Lemma \ref{lem:uniform-smoothing}}
\label{app:uniform-smoothing}

\UniformSmoothing*

\begin{proof}
For simplicity, we drop the subscript $\delta$ of $\hat h_\delta$. By Jensen's inequality and Lipschitzness,
\begin{align*}
    &\|\hat h(\x) - \hat h(\y)\| \le \E_\v\|h(\x+\delta\v) - h(\y+\delta\v)\| \le L\|\x-\y\|, \tag{i} \\
    &\|\hat h(\x) - h(\x)\| \le \E_\v\|h(\x+\delta\v)-h(\x)\| \le L\|\delta\v\| \le L\delta. \tag{ii}
\end{align*}

Next, since $h$ is Lipschitz, $h$ is almost every differentiable by Rademacher's theorem and the uniform smoothing $\hat h$ is also almost everywhere differentiable by \citep[Proposition 2.4]{bertsekas1973stochastic}. Moreover, it holds that $\nabla\hat h(\x) = \E_\v[\nabla h(\x+\delta\v)]$.
Denote $\triangle$ as the symmetric difference, i.e., 
$A\triangle B = (A\setminus B)\cup (B\setminus A)$, and let 
$S = B(\x,\delta)\triangle B(\y,\delta)$. Then
\begin{align*}
    \|\nabla\hat h(\x) - \nabla\hat h(\y)\|
    &= \|\E_{\v\sim\calU_\BB}[ \nabla h(\x+\delta\v) - \nabla h(\y+\delta\v) ]\| \\
    &= \left\| \int_{B(\x,\delta)} \frac{\nabla h(\v)}{\vol(B(\x,\delta))} \, d\v - \int_{B(\y,\delta)} \frac{\nabla h(\v)}{\vol(B(\y,\delta))} \, d\v \right\| \\
    &= \left\| \int_S
    \frac{\nabla h(\v)}{\vol(\delta\BB)} \, d\v \right\|
    \le \int_S \frac{\|\nabla h(\v)\|}{\vol(\delta\BB)} \, d\v
    \le \frac{\vol(S)L}{\vol(\delta\BB)}
    \le \frac{\sqrt{d}L}{\delta}\|\x-\y\|,
\end{align*}
where the second last inequality follows from $\|\nabla h(\v)\| \le L$, and the last follows from Lemma \ref{lem-aux:volume}. 

Finally, by definition of expectation,
\begin{align*}
    &\E_{\v\sim\calU_\BB}[\nabla h(\x+\delta\v)] = \frac{\int_{\BB}\nabla_\x f(\x+\delta\v) \, d\v}{\vol(\BB)}, \\
    &\E_{\u\sim\calU_\SS}[h(\x+\delta\u)\u] = \frac{\int_{\SS} f(\x+\delta\u) \u \, d\u}{\vol(\SS)}.
\end{align*}
By Stokes' theorem,
\begin{align*}
    \int_{\BB} \nabla_\x f(\x+\delta\v) \, d\v 
    = \int_{\SS} f(\x+\delta\u)\u \, d\u 
\end{align*}
We then establish the first equality in (iv) by $\vol(\BB)/\vol(\SS) = \delta/d$. Next, let $p$ be a Rademacher random variable and $\u'=\u/p$. Observe that $\u'\sim\calU_\SS$ as well. Therefore,
\begin{align*}
    \E_{\u\sim\calU_\SS}[ \tfrac{d}{\delta} h(\x+\delta\u)\u ] 
    &= \E_{p,\u'}[ \tfrac{d}{\delta} h(\x+\delta p\u')p\u' ] \\
    &= \tfrac{1}{2} \E_{\u'}[ \tfrac{d}{\delta} h(\x+\delta\u)\u + \tfrac{d}{\delta} h(\x-\delta\u)(-\u) ].
\end{align*}
This proves the second equality in (iv).
\end{proof}

\begin{lemma}
\label{lem-aux:volume}
Let $\x,\y\in\RR^d$ and $S=B(\x,\delta)\triangle B(\y,\delta)$, then $\frac{\vol(S)}{\vol(\delta\BB)} \le \frac{\sqrt{d}}{\delta}\|\x-\y\|$.
\end{lemma}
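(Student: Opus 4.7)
The plan is to reduce the claim to a one-dimensional estimate via a Fubini slicing along the direction $\vv = \y-\x$. Writing $r = \|\vv\|$ and using translation-invariance of the symmetric difference, without loss of generality $\x = 0$ and $\y = r\ve_1$, so that $S = \delta\BB \triangle (\delta\BB + r\ve_1)$. Write $\vz = (z_1, \vz_\perp)\in\RR\times\RR^{d-1}$; for each $\vz_\perp$ with $\|\vz_\perp\|<\delta$, the slice $\{z_1 : (z_1,\vz_\perp)\in\delta\BB\}$ is an interval of length $2\sqrt{\delta^2-\|\vz_\perp\|^2}\le 2\delta$, and the corresponding slice of $\delta\BB + r\ve_1$ is the same interval shifted by $r$. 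A direct inspection shows that the symmetric difference of two translates of an interval of length $L$ by $r$ has one-dimensional measure $\min(2L, 2r) \le 2r$.

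Integrating this slice bound over $\vz_\perp$ in the projection $\delta\BB^{d-1}$ of $\delta\BB$ onto $\ve_1^\perp$ yields
\begin{equation*}
    \vol(S) \;\le\; 2r\cdot \vol(\delta\BB^{d-1}) \;=\; 2r\delta^{d-1}\vol(\BB^{d-1}),
\end{equation*}
and dividing by $\vol(\delta\BB) = \delta^d\vol(\BB)$ reduces the lemma to the purely dimensional inequality $2\vol(\BB^{d-1})/\vol(\BB) \le \sqrt{d}$.

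To establish this, apply Fubini once more to the unit ball itself: $\vol(\BB) = \vol(\BB^{d-1})\int_{-1}^{1}(1-t^2)^{(d-1)/2}\,dt = 2\vol(\BB^{d-1})\,I_d$, where $I_d = \int_0^{\pi/2}\cos^d\theta\,d\theta$ is the standard Wallis integral. Since $I_d$ is decreasing in $d$ and the Wallis identity gives $I_d\,I_{d+1} = \pi/(2(d+1))$, we have $I_d^2 \ge I_d I_{d+1} = \pi/(2(d+1))$, hence $2\vol(\BB^{d-1})/\vol(\BB) = 1/I_d \le \sqrt{2(d+1)/\pi} \le \sqrt{d}$ for every $d \ge 2$; the case $d = 1$ gives equality by direct computation. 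Substituting back gives the claim $\vol(S)/\vol(\delta\BB) \le \sqrt{d}\,\|\x-\y\|/\delta$.

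The geometric slicing step is essentially just Fubini, so the main (mild) technicality is the closing gamma-function estimate. A cruder version (for instance, the Stirling-based $\vol(\BB^{d-1})/\vol(\BB) = \Theta(\sqrt{d})$) already suffices for the smoothness constant needed in Lemma~\ref{lem:uniform-smoothing}, but the Wallis identity above pins down the stated constant $\sqrt{d}$ cleanly.
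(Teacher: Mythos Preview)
Your proof is correct and follows essentially the same route as the paper: both slice the symmetric difference along the axis $\y-\x$, bound each one-dimensional slice by a constant times $\|\x-\y\|$, integrate over the $(d-1)$-dimensional cross-section to reduce to the ratio $\vol(\BB^{d-1})/\vol(\BB)$, and finish with an equivalent gamma/Wallis estimate (the paper phrases the slicing via spherical caps and bounds $\Gamma(\tfrac{d}{2}+1)/\Gamma(\tfrac{d+1}{2})$ by $\sqrt{2d}$ via double factorials, but the content is identical). Your direct Fubini packaging is a bit cleaner in that it handles the overlapping and non-overlapping cases uniformly, but there is no substantive difference.
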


\begin{proof}
Let $D=\|\x-\y\|$, $V_d(r)$ be the volume of an $d$-ball in $\RR^d$, and $U_d(h,r)$ be the volume of the cap of an $d$-ball of height $h$. Observe that for $D\le 2\delta$,
\begin{equation*}
    \frac{\vol(S)}{\vol(\delta\BB)} = 2\cdot \frac{U_d(\delta+\frac{D}{2},\delta) - U_d(\delta-\frac{D}{2},\delta)}{V_d(\delta)}.
\end{equation*}
Next, we compute the value of $U_d(h,r)$.
\begin{equation*}
    U_d(h,r) = \int_0^h V_{d-1}(\sqrt{r^2-(r-t)^2}) \, dt.
\end{equation*}
Recall that the volume of an $d$-ball is $V_d(r) = \frac{\pi^{d/2}}{\Gamma(d/2+1)}r^n$. Therefore,
\begin{align*}
    \frac{\vol(S)}{\vol(\delta\BB)}
    &= \frac{\frac{\pi^{(d-1)/2}}{\Gamma((d-1)/2+1)} \int_{\delta-\frac{D}{2}}^{\delta+\frac{D}{2}} (2\delta t - t^2)^{\frac{d-1}{2}} \, dt}{\frac{\pi^{d/2}}{\Gamma(d/2+1)} \delta^d} \\
    &= \frac{\Gamma(\frac{d}{2}+1)}{\sqrt{\pi}\Gamma(\frac{d-1}{2}+1)} \int_{\delta-\frac{D}{2}}^{\delta+\frac{D}{2}} \left(\frac{2t}{\delta} - \frac{t^2}{\delta^2} \right)^{\frac{d-1}{2}} \frac{1}{\delta} \, dt \\
    &= \frac{\Gamma(\frac{d}{2}+1)}{\sqrt{\pi}\Gamma(\frac{d-1}{2}+1)} \int_{1-\frac{D}{2\delta}}^{1+\frac{D}{2\delta}} (2u-u^2)^{\frac{d-1}{2}} \, du  \tag{$t=\delta u$}.
\end{align*}
Note that $2u-u^2 \le 1$ for all $u\in\RR$, so the integral is bounded by $\int_{1-\frac{D}{2\delta}}^{1+\frac{D}{2\delta}} 1 \, du = \frac{D}{\delta}$. The proof then follows from the fact that $\Gamma(\frac{d}{2}+1)/\Gamma(\frac{d-1}{2}+1) \le \sqrt{2d}$ (proved in Lemma \ref{lem-aux:Gamma} for completeness).
\end{proof}

\begin{lemma}
\label{lem-aux:Gamma}
For all $n\in\NN$, $\Gamma(\frac{n}{2}+1)/\Gamma(\frac{n-1}{2}+1) \le \sqrt{2n}$.
\end{lemma}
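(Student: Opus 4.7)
The plan is to reduce the ratio to a symmetric form and then exploit log-convexity of the Gamma function. First rewrite the denominator as $\Gamma((n-1)/2+1) = \Gamma((n+1)/2)$, so the quantity to bound becomes $\Gamma(n/2+1)/\Gamma((n+1)/2)$. Since $\log\Gamma$ is convex on $(0,\infty)$, for any $x,y>0$ one has $\Gamma((x+y)/2)^2 \le \Gamma(x)\,\Gamma(y)$. I would apply this with $x = (n+1)/2$ and $y = (n+3)/2$, whose midpoint is exactly $n/2+1$, to obtain
\[
\Gamma(n/2+1)^2 \;\le\; \Gamma\!\left(\tfrac{n+1}{2}\right)\,\Gamma\!\left(\tfrac{n+3}{2}\right).
\]

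Next, use the functional equation $\Gamma((n+3)/2) = \frac{n+1}{2}\,\Gamma((n+1)/2)$ to collapse the right-hand side to $\frac{n+1}{2}\,\Gamma((n+1)/2)^2$. Taking square roots gives
\[
\frac{\Gamma(n/2+1)}{\Gamma((n+1)/2)} \;\le\; \sqrt{\tfrac{n+1}{2}}.
\]
Finally, verify that $(n+1)/2 \le 2n$ whenever $n \ge 1$ (equivalently $n \ge 1/3$), and the claimed bound $\sqrt{2n}$ follows immediately.

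There is essentially no obstacle here once one recognizes that the product $\Gamma((n+1)/2)\Gamma((n+3)/2)$ telescopes via a single application of $\Gamma(x+1)=x\Gamma(x)$; log-convexity of $\Gamma$ is a classical fact and can be cited directly. Alternative routes include Gautschi's inequality $\Gamma(x+1)/\Gamma(x+1/2) \le \sqrt{x+1/2}$ applied at $x=n/2$, or a short induction that pairs consecutive ratios $a_n := \Gamma(n/2+1)/\Gamma((n+1)/2)$ using the identities $a_{2m}\,a_{2m-1}=m$ and $a_{2m+1}\,a_{2m}=m+\tfrac{1}{2}$, but the log-convexity argument is the cleanest and cleanly yields the slightly sharper constant $\sqrt{(n+1)/2}$ that comfortably absorbs into $\sqrt{2n}$.
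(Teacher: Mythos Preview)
Your argument is correct and genuinely different from the paper's. The paper first expresses $\Gamma(\tfrac{n}{2}+1)$ in terms of double factorials, reduces the ratio to $n!!/(n-1)!!$, and then proves $n!!/(n-1)!!\le\sqrt{2n}$ by induction on $n$. You instead invoke log-convexity of $\Gamma$ at the midpoint $\tfrac{n}{2}+1$ of $\tfrac{n+1}{2}$ and $\tfrac{n+3}{2}$, collapse the resulting product with a single use of $\Gamma(x+1)=x\Gamma(x)$, and obtain the sharper bound $\sqrt{(n+1)/2}$ directly. Your route is shorter, avoids the parity split and the induction entirely, and yields a constant that is better by a factor of roughly $2$; the paper's approach is more elementary in that it only uses the recursion $\Gamma(x+1)=x\Gamma(x)$ and $\Gamma(\tfrac12)=\sqrt{\pi}$, never appealing to convexity. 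Either way the conclusion $\le\sqrt{2n}$ follows comfortably.
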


\begin{proof}
Let $n!! = n(n-2)(n-4)\dotsm = \prod_{k=0}^{\lceil\frac{n}{2}\rceil-1} (n-2k)$, then by definition of $\Gamma(n+1)=n\gamma(n)$ and $\Gamma(\frac{1}{2})=\sqrt{\pi}$, we have
\begin{equation*}
    \Gamma(\tfrac{n}{2}+1) = 
    \left\{
    \begin{array}{ll}
        \frac{n!!}{2^{n/2}}, & \text{$n$ is even} \\
        \frac{\sqrt{\pi}n!!}{2^{(n+1)/2}}, & \text{$n$ is odd}
    \end{array}
    \right.
\end{equation*}
Consequently, we have
\begin{equation*}
    \frac{\Gamma(\tfrac{n}{2}+1)}{\Gamma(\frac{n-1}{2}+1)} = 
    \left\{
    \begin{array}{ll}
        \frac{n!!}{\sqrt{\pi}(n-1)!!}, & \text{$n$ is even} \\
        \frac{\sqrt{\pi}n!!}{2(n-1)!!}, & \text{$n$ is odd}
    \end{array}
    \right.
    \le \frac{n!!}{(n-1)!!}.
\end{equation*}
We finish the proof by induction. For $n=1$, $\frac{1!!}{0!!} = 1 \le \sqrt{2\cdot 1}$. For $n=2$, $\frac{2!!}{1!!}=2 \le \sqrt{2\cdot 2}$. Now assume $\frac{m!!}{(m-1)!!} \le \sqrt{2m}$ for all $m\le n$. Then
\begin{align*}
    \frac{(n+1)!!}{n!!} = \frac{n+1}{n} \frac{(n-1)!!}{(n-2)!!}
    \le \frac{n+1}{n} \sqrt{2(n-1)} = \sqrt{2(n+1)}\frac{\sqrt{(n+1)(n-1)}}{n} \le \sqrt{2(n+1)}.
\end{align*}
This concludes the proof by induction.
\end{proof}

\subsection{Proof of Corollary \ref{cor:smooth-reduction}}

\SmoothReduction*

\begin{proof}
Recall that the $\delta$-subdifferential of $\hat F_\delta$ is defined as $\partial_\delta \hat F_\delta(\x) = \conv(\cup_{\y \in B(\x,\delta)} \nabla \hat F_\delta (\y))$. In other words, for each $\g\in \partial_\delta \hat F_\delta(\x)$, $\g$ is of form of $\g=\sum \lambda_i \nabla \hat F_\delta(\y_i)$ where $\lambda_i$’s are convex coefficients and $\y_i\in B(\x,\delta)$. \citep[Theorem 10]{lin2022gradientfree} has proved that $\nabla \hat F_\delta(\y_i) \in \partial_\delta F(\y_i)$. In addition, since $\|\y_i-\x\| \le \delta$, we have $\partial_\delta F(\y_i) \subset \partial_{2\delta} F(\x)$. Consequently, we have 
\begin{equation*}
    \nabla \hat F_\delta(\y_i) \in \partial_\delta F(\y_i) \subset \partial_{2\delta}F(\x)
\end{equation*}
and thus $\g \in \partial_{2\delta} F(\x)$ for all $\g\in\partial_\delta\hat F_\delta(\x)$. We conclude the proof by recalling Definition \ref{def:stationary} that 
\begin{equation*}
    \|\nabla \hat F_\delta (\x)\|_\delta = \inf \{\|\g\|: \g\in \partial_\delta \hat F_\delta(\x)\}, \quad \|\nabla F(\x)\|_{2\delta} = \inf \{\|\g\|: \g\in \partial_{2\delta} F(\x)\}
\end{equation*}
and the previous result that $\|\partial_\delta \hat F_\delta(\x) \subset \partial_{2\delta} F(\x)$.
\end{proof}
\section{Proofs in Section \ref{sec:estimator}}
\label{app:diff-estimator}

\GradEstimator*

\begin{proof}
For simplicity, let $\g_{ij} = \frac{d}{2\delta}(f(\x+\delta\u_{ij},z_i)-f(\x-\delta\u_{ij},z_i))\u_{ij}$ so that $\grad_{f,\delta}(\x,z_{1:b}) = \frac{1}{b}\sum_{i=1}^b\frac{1}{d}\sum_{j=1}^d \g_{ij}$. Since $f$ is $L$-Lipschitz, it follows that $\|\g_{ij}\| \le dL$.

First, by $F(\x)=\E_z[f(\x,z)]$ and Lemma \ref{lem:uniform-smoothing} (iv),
\begin{align*}
    \E_{\u,z}[ \g_{ij} ] 
    = \E_{\u}[ \tfrac{d}{2\delta}(F(\x+\delta\u_{ij})-F(\x-\delta\u_{ij}))\u_{ij} ] 
    = \nabla\hat F_\delta(\x).
\end{align*}
Taking the average over $i,j$ gives $\E[\grad_{f,\delta}(\x,z_{1:b})] = \nabla\hat F_\delta(\x)$.

Next, let $\overline\g_i = \frac{1}{d}\sum_{j=1}^d \g_{ij}$ and observe that $\E[\overline\g_i] = \nabla\hat F_\delta(\x)$. 
Since $\overline\g_i - \nabla\hat F_\delta(\x)$ are mean-zero and independent for all $i$, we have 
\begin{align*}
    \E[\langle \overline\g_i-\nabla\hat F_\delta(\x), \overline\g_{i'}-\nabla\hat F_\delta(\x) \rangle]
    &= \E_{z_{i'}}[ \langle \E_{z_i}[ \overline\g_i-\nabla\hat F_\delta(\x) ], \overline\g_{i'}-\nabla\hat F_\delta(\x) \rangle | z_{i'} ] = 0.
\end{align*}
In other words, all cross-terms are $0$. Therefore,
\begin{align}
    &\E\|\grad_{f,\delta}(\x,z_{1:b}) - \nabla\hat F_\delta(\x)\|^2 \notag\\
    &= \textstyle \frac{1}{b^2}\sum_{i=1}^b \E\| \overline\g_i - \nabla\hat F_\delta(\x) \|^2 \notag\\
    &\le \textstyle \frac{1}{b^2}\sum_{i=1}^b 2\E\|\overline\g_i - \nabla\hat f_\delta(\x,z_i)\|^2 + 2\E\|\nabla\hat f(\x,z_i) - \nabla\hat F_\delta(\x)\|^2.
    \label{eq:grad-est-1}
\end{align}
We first evaluate the second term.
By Lemma \ref{lem:uniform-smoothing} (i), $\hat f_\delta(\cdot,z_i)$ and $F_\delta$ are $L$-Lipschitz, which implies that $\|\nabla\hat f_\delta(\x,z_i)\|\le L$ and $\|\nabla\hat F_\delta(\x)\| \le L$. Consequently, $\E\|\nabla\hat f(\x,z_i) - \nabla\hat F_\delta(\x)\|^2 \le (2L)^2$. 
Next, note that $\g_{ij}-\nabla\hat f_\delta(\x,z_i)|z_i$ are mean-zero and independent for all $j$. Therefore,
\begin{align*}
    \E\|\overline\g_i - \nabla\hat f_\delta(\x,z_i)\|^2 
    &= \textstyle \frac{1}{d^2}\sum_{j=1}^d \E\|\g_{ij} - \nabla\hat f_\delta(\x,z_i)\|^2 
    \le \frac{(d+1)^2L^2}{d}.
\end{align*}
The last inequality follows from $\|\g_{ij} - \nabla\hat f_\delta(\x,z_i)\| \le \|\g_{ij}\| + \|\nabla\hat f_\delta(\x,z_i)\| \le (d+1)L$.
Upon substituting back into \eqref{eq:grad-est-1}, we have
\begin{equation*}
    \textstyle
    \E\|\grad_{f,\delta}(\x,z_{1:b}) - \nabla\hat F_\delta(\x)\|^2
    \le \frac{2}{b} \left( \frac{(d+1)^2L^2}{d} + (2L)^2 \right) 
    \le \frac{16dL^2}{b}. 
\end{equation*}

Finally, let $\g'_{ij},\overline\g'_i$ be defined in the same way as $\g_{ij},\overline\g_i$ but using $z'_{1:b}$. Since $z_{1:b},z'_{1:b}$ are neighboring, $\overline\g_i - \overline\g_i' = 0$ for all but one $i$. Let $k$ be the index of the different data point. Then
\begin{equation*}
    \textstyle
    \|\grad_{f,\delta}(\x,z_{1:b}) - \grad_{f,\delta}(\x,z'_{1:b})\| 
    = \frac{1}{b}\|\overline\g_k - \overline\g_k'\| 
    \le \frac{1}{bd}\sum_{j=1}^d \| \g_{kj} - \g_{kj}' \| 
    \le \frac{2dL}{b}. 
    \qedhere
\end{equation*}
\end{proof}

\DiffEstimator*

\begin{proof}
Let $\d_{ij} = \tfrac{d}{\delta}(f(\x+\delta\u_{ij},z_i)-f(\y+\delta\u_{ij},z_i))\u_{ij}$. Then it follows that $\diff_{f,\delta}(\x,\y,z_{1:b}) = \frac{1}{b}\sum_{i=1}^b\frac{1}{d}\sum_{j=1}^d\d_{ij}$. Since $f$ is $L$-Lipschitz, it also holds that $\|\d_{ij}\| \le \frac{dL}{\delta}\|\x-\y\|$.

First, by $F(\x)=\E_z[f(\x,z)]$ and Lemma \ref{lem:uniform-smoothing} (iv),
\begin{align*}
    \E_{\u,z}[\d_{ij}]
    = \E_{\u}[\tfrac{d}{\delta}(F(\x+\delta\u_{ij})-F(\y+\delta\u_{ij})\u_{ij}]
    = \nabla\hat F_\delta(\x) - \nabla\hat F_\delta(\y).
\end{align*}
Taking the average over $i,j$ gives $\E_{\u,z}[ \diff_{f,\delta}(\x,\y,z_{1:b})] = \nabla\hat F_\delta(\x) - \nabla\hat F_\delta(\y)$.

Next, let $\overline\d_i = \frac{1}{k}\sum_{j=1}^k\d_{ij}$, $\Delta_F = \nabla\hat F_\delta(\x) - \nabla\hat F_\delta(\y)$, and $\Delta_{f}(z_i) = \nabla\hat f_\delta(\x,z_i)-\nabla\hat f_\delta(\y,z_i)$. Observe that $\E[\overline\d_i] = \Delta_F$ and $\E[\d_{ij}|z_i] = \Delta_f(z_i)$.
Since $\overline\d_i - \Delta_F$ are independent and mean-zero for all $i$, we have
\begin{align}
    &\E\| \diff_{f,\delta}(\x,\y,z_{1:b}) - [\nabla\hat F_\delta(\x) - \nabla\hat F_\delta(\y)] \|^2 \notag\\
    &= \textstyle \frac{1}{b^2}\sum_{i=1}^b \E\| \overline\d_i - \Delta_F \|^2 \notag\\
    &\le \textstyle \frac{1}{b^2}\sum_{i=1}^b 2\E\|\overline\d_i - \Delta_f(z_i)\|^2 + 2\E\|\Delta_f(z_i) - \Delta_F\|^2. 
    \label{eq:diff-est-1}
\end{align}
We first evaluate the second term.
By Lemma \ref{lem:uniform-smoothing} (iii), $\hat f_\delta, F_\delta$ are $\frac{\sqrt{d}L}{\delta}$-smooth, which implies that $\|\Delta_f(z_i)\|, \|\Delta_F\| \le \frac{\sqrt{d}L}{\delta}\|\x-\y\|$. Consequently, $\E\|\Delta_f(z_i)-\Delta_F\|^2 \le (\frac{2\sqrt{d}L}{\delta}\|\x-\y\|)^2$. 
Next, since $\d_{ij}-\Delta_f(z_i)|z_i$ are mean-zero and independent for all $j$,
\begin{align*}
    \E\|\overline\d_i - \Delta_f(z_i)\|^2 
    &\le \textstyle \frac{1}{d^2}\sum_{j=1}^d \E\|\d_{ij} - \Delta_f(z_i)\|^2 \le \frac{(2d^2+2d)L^2}{d\delta^2}\|\x-\y\|^2.
\end{align*}
The last inequality follows from $\|\d_{ij}\| \le \frac{dL}{b}\|\x-\y\|$ and $\|\Delta_f(z_i)\| \le \frac{\sqrt{d}L}{\delta}\|\x-\y\|$.
Therefore, upon substituting back into \eqref{eq:diff-est-1}, we have
\begin{equation*}
    \textstyle
    \E\| \diff_{f,\delta}(\x,\y,z_{1:b}) - [\nabla\hat F_\delta(\x) - \nabla\hat F_\delta(\y)] \|^2
    \le \frac{16dL^2}{b\delta^2}\|\x-\y\|^2. 
\end{equation*}

Finally, let $\d'_{ij},\overline\d'_i$ be defined in the same way as $\d_{ij}, \overline\d_i$ but using $z'_{1:b}$. Since $z_{1:b},z'_{1:b}$ are neighboring, $\overline\d_i-\overline\d_i' = 0$ for all but one $i$. Let $k$ be the index of the different data point. Then 
\begin{equation*}
    \textstyle
    \|\diff_f(\x,\y,z_{1:b}) - \diff_f(\x,\y,z_{1:b}')\|
    = \frac{1}{b}\|\overline\d_k - \overline\d_k'\| \le \frac{2dL}{b\delta}\|\x-\y\|. \qedhere
\end{equation*}
\end{proof}
\section{Proofs in Section \ref{sec:privacy}}

\subsection{Proof of Theorem \ref{thm:tree}}
\label{app:tree}

Before we prove the theorem, we first prove a more general composition of RDP mechanisms. For two datasets $Z,Z'\in\calZ$, we denote $Z\simeq_q Z'$ if $Z,Z'$ are neighbors and they differ at the $q$-th entry.

\begin{lemma}
\label{lem-aux:RDP-composition}
For any domain $\calZ$, let $\calR^{(i)}:\calS^{(1)}\times\dotsm\times\calS^{(i-1)}\times\calZ \to \calS^{(i)}$ be a sequence of algorithms such that $\calR^{(i)}(\x_{1:i-1},\cdot)$ is $(\alpha,\epsilon_i)$-RDP for all auxiliary inputs $\x_{1:i-1}\in \calS^{(i)}\times\dotsm\times\calS^{(i-1)}$. Let $f_{\calR^{(i)}(\x_{1:i-1}, Z)}$ be the distribution of $\calR^{(i)}(\x_{1:i-1}, Z)$. Suppose each dataset $Z\in\calZ$ has $m$ data, and for each $q\in[m]$, let
\begin{equation*}
    \textsc{In}(q) := \{ i\in[n] : f_{\calR^{(i)}(\cdot, Z)} = f_{\calR^{(i)}(\cdot, Z')}, \, \forall Z\simeq_q Z' \}, \quad \textsc{Out}(q) := [n]\setminus \textsc{In}(q).
\end{equation*} 
Let $\calA_n:\calZ\to\calS^{(1)}\times\dotsm\times\calS^{(n)}$ be the algorithm that given a dataset $Z\in\calZ$ sequentially computes $\x_i = \calR^{(i)}(\x_{1:i-1},z_i)$ for $i\in[n]$ and then outputs $\x_{1:n}$. Then $\calA_n$ is $(\alpha, \epsilon)$-RDP, where
\begin{equation*}
    \textstyle
    \epsilon \le \max_{q\in[m]} \sum_{i\in \textsc{Out}(q)} \epsilon_i.
\end{equation*}
\end{lemma}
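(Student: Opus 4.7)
The plan is to reduce this lemma to the standard adaptive composition theorem for Rényi differential privacy (in the style of Proposition 1 of Mironov, 2017), with the refinement that indices $i \in \textsc{In}(q)$ drop out of the composition because the corresponding conditional distributions under $Z$ and $Z'$ are identical.

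First I would fix an arbitrary $q \in [m]$ and neighboring datasets $Z \simeq_q Z'$; it then suffices to prove $D_\alpha(\calA_n(Z) \| \calA_n(Z')) \le \sum_{i \in \textsc{Out}(q)} \epsilon_i$, after which taking a maximum over $q$ yields the stated bound. Writing $p_i(\cdot \mid \x_{1:i-1}; Z)$ for the density $f_{\calR^{(i)}(\x_{1:i-1}, Z)}$, the chain rule gives
\begin{equation*}
    \frac{d\calA_n(Z)}{d\calA_n(Z')}(\x_{1:n}) = \prod_{i=1}^n \frac{p_i(\x_i \mid \x_{1:i-1}; Z)}{p_i(\x_i \mid \x_{1:i-1}; Z')},
\end{equation*}
and by the very definition of $\textsc{In}(q)$ the $i$-th factor is identically $1$ whenever $i \in \textsc{In}(q)$.

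The main computation would be to evaluate $\E_{\calA_n(Z)}\!\left[(d\calA_n(Z)/d\calA_n(Z'))^{\alpha-1}\right]$ by peeling off factors from $i = n$ down to $i = 1$ using the tower property of conditional expectation. Given $\x_{1:i-1}$, the $i$-th inner expectation is exactly $\exp\bigl((\alpha-1)\, D_\alpha(p_i(\cdot\mid\x_{<i};Z)\,\|\,p_i(\cdot\mid\x_{<i};Z'))\bigr)$. For $i \in \textsc{In}(q)$ this factor equals $1$; for $i \in \textsc{Out}(q)$, the assumption that $\calR^{(i)}(\x_{1:i-1},\cdot)$ is $(\alpha,\epsilon_i)$-RDP bounds it by $\exp((\alpha-1)\epsilon_i)$ uniformly in $\x_{1:i-1}$, since $Z$ and $Z'$ are neighbors. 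Multiplying these bounds through the peel-off induction gives
\begin{equation*}
    \E_{\calA_n(Z)}\!\left[\left(\tfrac{d\calA_n(Z)}{d\calA_n(Z')}\right)^{\alpha-1}\right] \le \exp\!\left((\alpha-1)\sum_{i\in\textsc{Out}(q)}\epsilon_i\right),
\end{equation*}
which is exactly $D_\alpha(\calA_n(Z)\,\|\,\calA_n(Z')) \le \sum_{i\in\textsc{Out}(q)}\epsilon_i$, as required.

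I do not expect a genuine conceptual obstacle here: the lemma is morally just standard adaptive RDP composition plus the trivial observation that a conditional mechanism whose law does not depend on the perturbed data point contributes nothing to the composed Rényi divergence. The only care needed is bookkeeping in the peel-off induction on $n$ (equivalently, a clean induction with base case $n=0$), making sure that at each step the RDP hypothesis is applied to $\calR^{(i)}$ with the prefix $\x_{1:i-1}$ fixed and that the cases $i \in \textsc{In}(q)$ versus $i \in \textsc{Out}(q)$ are separated correctly.
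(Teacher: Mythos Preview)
Your proposal is correct and matches the paper's proof essentially step for step: both fix $q$ and neighboring $Z\simeq_q Z'$, factor the joint law into conditionals, observe that the $i\in\textsc{In}(q)$ factors contribute nothing, and then recursively integrate (equivalently, peel off via the tower property) from $i=n$ down to $i=1$, bounding each surviving factor by $e^{(\alpha-1)\epsilon_i}$ via the RDP hypothesis. The only cosmetic difference is that the paper writes the inner integrals directly as $\int P_i^\alpha Q_i^{1-\alpha}\,d\x_i$ rather than as an expectation of a likelihood ratio.
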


\begin{proof}
Suppose $Z\simeq_q Z'$ and let $P,Q$ be the distributions of $\calA_n(Z), \calA_n(Z')$ respectively. 
Note that $P=P_1\times\dotsm\times P_n$ where $P_i$ is the distribution of $\calA_i(Z)$ given $\calA_{i-1}(Z)$, i.e., $P_i(\x_i|\x_{1:i-1}) = f_{\calR^{(i)}(\x_{1:i-1},Z)}(\x_i)$. Similarly, $Q=Q_1\times\dotsm\times Q_n$ where $Q_i(\x_i|\x_{1:i-1})=f_{\calR^{(i)}(\x_{1:i-1},Z')}(\x_i)$.

For all $i\in \textsc{In}(q)$, $P_i(\cdot|\x_{1:i-1})=Q_i(\cdot|\x_{1:i-1})$ for all $\x_{1:i-1}$ by definition of $\textsc{In}(q)$; and for all $i\in\textsc{Out}(q)$, since $\calR^{(i)}(\x_{1:i-1},\cdot)$ is $(\alpha,\epsilon_i)$-RDP, $D_\alpha(P_i(\cdot|\x_{1:i-1})\|Q_i(\cdot|\x_{1:i-1})\le \epsilon_i$. Therefore,
\begin{align*}
    &\int P_i(\x_i|\x_{1:i-1})^\alpha Q_i(\x_i|\x_{1:i-1})^{1-\alpha} \, d\x_i \\
    &= \exp({(\alpha-1)D_\alpha(P_i(\cdot|\x_{1:i-1})\|Q_i(\cdot|\x_{1:i-1})})
    \le \left\{
    \begin{array}{ll}
        1, & i\in \textsc{In}(q),  \\
        e^{(\alpha-1)\epsilon_i}, & i\in \textsc{Out}(q).
    \end{array}\right.
\end{align*}
We then conclude the proof by recursively integrating from $d\x_n$ to $d\x_1$:
\begin{equation*}
    e^{(\alpha-1)D_\alpha(P\|Q)} = \int \prod P_i(\x_i|\x_{1:i-1})^\alpha Q_i(\x_i|\x_{1:i-1})^{1-\alpha} \, d\x_n \dotsm d\x_1 \le \prod_{i\in\textsc{Out}(q)} e^{(\alpha-1)\epsilon_i}.
    \qedhere
\end{equation*}
\end{proof}

Next, we introduce several tree notations. Let $k\in\NN_0$ and consider a complete binary tree with $n=2^k$ leaves. Leaves are indexed by $i\in[n]$ (or $(i,i)$ interchangeably) in an ascending order from left to right, and every other node is indexed by $(i,j)$ where $i$ is its left-most child leaf and $j$ is its right-most child leaf. Let $\sT$ be the set of all nodes, 
and let $<$ be a total order on $\sT$ such that $(a,b) < (c,d)$ if either $b < d$ or $b=d, a<c$. In other words, we compare rightmost child first and break tie with leftmost child. We assume $(\sT,<)$ is sorted in ascending order, then it's valid to label a sequence w.r.t. $\sT$ (e.g., we can rewrite $a_1,\ldots,a_{|\sT|}$ as $a_{(i,j)}$ for $(i,j)\in\sT$). 
We denote $a_{(c,d)<(a,b)}$ as a short-hand notation of $\{a_{(c,d)}:(c,d)<(a,b)\}$.
Now we are ready to prove the theorem.


\TreeMechanism*

\begin{proof}
For any $n\in\NN$, define $\sT$ as he tree set with $2^{\lceil\log_2 n\rceil}$ leaves.
Let $\calZ = \calZ^{(1:n)}$ and $S^{(a,b)}=\calX$ for all $(a,b)\in\sT$. For each $(a,b)\in \sT$, define $\calR^{(a,b)}: ( \prod_{(c,d)<(a,b)} \calS^{(c,d)} ) \times \calZ \to \calS^{(a,b)}$ as
\begin{equation}
    \calR^{(a,b)}(\y_{(c,d)<(a,b)},Z_{1:n}) = \sum_{i=a}^b \calM^{(i)}(\x_{1:i-1},Z_i)
    + \xi_b, 
    \label{eq:1-tree-1}
\end{equation}
where $\x_j = \sum_{(c,d)\in \textsc{Node}(j)} \y_{(c,d)}$ for each $j\in[i-1]$ and $\xi_b\sim N(0,\sigma_b^2I)$. We can check that $\x_j$ is well-defined: for all $j \le b$ and for all $(c,d)\in \textsc{Node}(j)$, we have $(c,d)<(a,b)$ by definition of $\textsc{Node}$ and thus $\x_{1:i-1}$ can be computed from $\y_{(i,j)<(a,b)}$ for all $i\in[a,b]$.

Next, since for all neighboring $Z_{1:n},Z_{1:n}'\in\calZ$, $\|\calM^{(i)}(\x_{1:i-1},Z_i)-\calM^{(i)}(\x_{1:i-1},Z_i')\| \le s_i$, $\calR^{(a,b)}(\y_{(c,d)<(a,b)},\cdot)$ has sensitivity bounded by $s_{(a,b)} := \max_{i\in[a,b]} s_i$. Therefore, upon adding Gaussian noise $\calN(0,\sigma_b^2I)$, $\calR^{(a,b)}(\y_{(c,d)<(a,b)},\cdot)$ is $(\alpha, \epsilon_{(a,b)})$-RDP where $\epsilon_{(a,b)} = {\alpha s_{(a,b)}^2}/{2\sigma_b^2}$.

Now we are ready to apply Lemma \ref{lem-aux:RDP-composition}.
Let $\calA_\calR:\calZ\to\prod\calS^{(a,b)}$ be the composition of $\calR$, i.e., given dataset $Z_{1:n}$ sequentially computes $\y_{(a,b)}=\calR^{(a,b)}(\y_{(c,d)<(a,b)},Z_{1:n})$ and outputs $\y_{(a,b)\in\sT}$. Then by Lemma \ref{lem-aux:RDP-composition}, $\calA_\calR$ is $(\alpha,\epsilon)$-RDP where
\begin{equation*}
    \epsilon \le \max_{q} \sum_{(a,b) \in \textsc{Out}(q)} \epsilon_{(a,b)} = \max_{q} \sum_{(a,b)\in \textsc{Out}(q)} \max_{i\in[a.b]} \frac{\alpha s_i^2}{2\sigma_b^2}.
\end{equation*}
Observe that node $(a,b)\in\textsc{Out}(q)$ if and only if the $q$-th data in $Z_{1:n}$ is in $Z_i$ and $i\in[a,b]$.
Since there is exactly one such node in each of $\lceil\log_2 n\rceil+1$ layers in the binary tree associated with $\sT$, we have $|\textsc{Out}(q)|=\lceil\log_2 n\rceil+1 \le 2\ln n$. Consequently,
\begin{equation*}
    \epsilon = \frac{\alpha\rho^2}{2} \le \alpha\ln n \cdot \max_{b\in[n], i\le b}\frac{s_i^2}{\sigma_b^2}.
\end{equation*}
Finally, we conclude the proof by claiming that algorithm $\calA$ defined in the theorem can be derived from $\calA_\calR$ after post-processing. Observe that $\sum_{(a,b)\in \textsc{Node}(i)}\sum_{j=a}^b = \sum_{j=1}^i$, so by \eqref{eq:1-tree-1},
\begin{align*}
    \x_i 
    = \sum_{(a,b)\in\textsc{Node}(i)} \y_{(a,b)} 
    &= \sum_{(a,b)\in\textsc{Node}(i)} \left( \sum_{j=a}^b\calM^{(j)}(\x_{1:j-1}, Z_j) + \xi_b \right) \\
    &= \sum_{j=1}^i \calM^{(j)}(\x_{1:j-1},Z_j) + \textsc{Tree}(i).
\end{align*}
Therefore, $\calA$ is indeed a post-processing of $\calA_\calR$, which concludes the proof.
\end{proof}

\subsection{Proof of Corollary \ref{cor:privacy}}

\TreeCorollary*

\begin{proof}
Since the private oracle defined in Algorithm \ref{alg:oracle-variance-reduced} uses disjoint data in different iteration $k\in[K]$, Algorithm \ref{alg:PONC} is a post-processing of $\{(\tilde\g_1^k,\ldots,\tilde\g_T^k)\}_{k\in[K]}$. Therefore, it suffices to prove that for each $k\in[K]$, the composition $(\tilde\g_1^k,\ldots,\tilde \g_T^k)$ is $(\alpha,\alpha\rho^2/2)$-RDP.

The key is to observe that $(\tilde\g_1^k,\ldots,\tilde\g_T^k)$ can be written as the adaptive composition in Theorem \ref{thm:tree}. Let $\g_1^k = \calM^{(1)}(Z_1^k) \triangleq \grad_{f,\delta}(\w_1^k,Z_1^k)$ and $\d_i^k = \calM^{(i)}(\tilde\g_{1:i-1}^k, Z_i^k) \triangleq \diff_{f,\delta}(\w_i^k,\w_{i-1}^k,Z_i^k)$. This is well-defined because 
$\w_i^k$ is a post-processing of $\tilde\g_{1:i-1}^k$ by construction of O2NC. Therefore, $\g_t^k=\g_1^k+\sum_{i=2}^t \d_i^k = \sum_{i=1}^t \calM^{(i)}(\tilde\g_{1:i-1}^k,Z_i^k)$, and Theorem \ref{thm:tree} can be applied.

By Lemma \ref{lem:grad-estimator}, the sensitivity of $\calM^{(1)}$ is bounded by $s_1 = \frac{2dL}{B_1}$. By Remark \ref{rmk:w-distance} and Lemma \ref{lem:diff-estimator}, the sensitivity of $\calM^{(i)}(\tilde\g_{1:i-1}^k,\cdot)$ is bounded by $s_i=\frac{2dL}{B_2\delta}\|\w_i^k-\w_{i-1}^k\| \le \frac{4dL}{B_2T}$ for all $i\ge 2$.
Since we assume $B_1\ge TB_2/2$, it holds that $s_1\le s_t$ for all $t\ge 2$. Therefore, upon setting $\sigma_t = {\sqrt{2\ln T}s_2}/{\rho}$ for all $t\in[T]$, Theorem \ref{thm:tree} implies that $(\tilde\g_1^k,\ldots\tilde\g_T^k)$ is $(\alpha,\alpha\rho^{\prime 2}/2)$-RDP where
\begin{equation*}
    \rho' \le \sqrt{2\ln T} \cdot \max_{b\in[n], i\le n} \frac{s_i}{\sigma_b} \le \sqrt{2\ln T} \cdot \frac{s_2}{\sigma_2} = \rho.
    \qedhere
\end{equation*}
\end{proof}

\subsection{Proof of Corollary \ref{cor:tree-noise}}

\TreeNoise*

\begin{proof}
Recall that $\tree(t)=\sum_{(\cdot,i)\in\node(t)} \xi_i$. Since $\xi_i$'s are independent and $\xi_i\sim \calN(0,\sigma_i^2I)$,
\begin{equation*}
    \E\|\tree(t)\|^2
    = \sum_{(\cdot,i)\in\node(t)} \E\|\xi_i\|^2
    = \sum_{(\cdot,i)\in\node(t)} d\sigma_i^2
    \le 2\ln(t) d\sigma^2.
\end{equation*}
The last inequality follows from Remark \ref{rmk:tree} that $|\node(t)|\le 2\ln(t)$. We then conclude the proof by substituting the value of $\sigma$ defined in Corollary \ref{cor:privacy}.
\end{proof}
\section{Missing Proofs in Section \ref{sec:convergence}}
\label{app:base-lemma}

\BaseLemma*

\begin{proof}
First, by fundamental theorem of calculus, we have
\begin{align*}
    F(\x_{t+1}^k) - F(\x_t^k)
    &= \int_0^1 \langle \nabla F(\x_t^k + t(\x_{t+1}^k-\x_t^k)), \x_{t+1}^k-\x_t^k\rangle \, dt \\
    &= \int_0^1 \langle \nabla F(\x_t^k+t\Delta_t^k), \Delta_t^k\rangle \, dt
    = \Ex_{s_t^k} \langle \nabla F(\w_t^k), \Delta_t^k\rangle. 
\end{align*}
Next, upon taking the telescopic sum (and recall $x_1^{k+1} = x_{T+1}^k$), we have
\begin{align*}
    \E F(\x_{T+1}^K) - F(\x_1^1)
    &= \sum_{k=1}^K\sum_{t=1}^T \E[ F(\x_{t+1}^k) - F(\x_t^k) ] 
    = \sum_{k=1}^K\sum_{t=1}^T \E\langle \nabla F(\w_t^k), \Delta_t^k \rangle.
\end{align*}
Note that $\langle \nabla F(\w_t^k), \Delta_t^k \rangle = \langle \nabla F(\w_t^k), \u^k \rangle + \langle \tilde\g_t^k, \Delta_t^k-\u^k \rangle + \langle \nabla F(\w_t^k) - \tilde \g_t^k, \Delta_t^k - \u^k \rangle$. Therefore, by definition of $\u^k$ and $\regret_T(\u^k)$,
\begin{equation*}
    \sum_{t=1}^T\langle \nabla F(\w_t^k), \Delta_t^k \rangle
    = -D\left\|\sum_{t=1}^T \nabla F(\w_t^k) \right\| + \regret_T(\u^k) + \sum_{t=1}^T \langle \nabla F(\w_t^k)-\tilde\g_t^k, \Delta_t^k-\u^k \rangle.
\end{equation*}
Upon rearranging the inequality and applying $F(\x_{T+1}^K)-F(\x_1^1)\le F^*$, we have
\begin{align*}
    \sum_{k=1}^K D\E\left\|\sum_{t=1}^T\nabla F(\w_t^k)\right\|
    &\le F^* + \sum_{k=1}^K \E\regret_T(\u^k) + \E\sum_{k=1}^K\sum_{t=1}^T \langle \nabla F(\w_t^k)-\tilde\g_t^k, \Delta_t^k-\u_t^k \rangle.
\end{align*}
Finally, note that $\|\w_t^k-\overline w^k\| \le DT = \delta$, so $\|\nabla F(\overline w^k)\|_\delta \le \|\frac{1}{T}\sum_{t=1}^T \nabla F(\w_t^k)\|$ by definition of $\|\cdot\|_\delta$. Moreover, $\E\|\nabla F(\overline\w)\|_\delta = \E[\frac{1}{K}\sum_{i=1}^K\|\nabla F(\overline w^k)\|_\delta]$. Therefore, dividing both sides of the inequality by $DKT$ concludes the proof.
\end{proof}

\BaseCorollary*

\begin{proof}
By Lemma \ref{lem:uniform-smoothing}, $\hat F_\delta$ is differentiable. Next, since $\|\hat F_\delta - F\| \le L\delta$, 
\begin{equation*}
    \hat F_\delta(\x_1) - \inf_\x \hat F_\delta(\x)
    \le F(\x_1) - \inf_\x F(\x) + 2L\delta = F^* + 2L\delta.
\end{equation*}
Hence, upon applying Lemma \ref{lem:O2NC-base} on the uniform smoothing $\hat F_\delta$, we have
\begin{equation*}
    \E\|\hat F_\delta(\overline\w)\|_\delta \le 
    \frac{F^*+2L\delta}{DKT} + \sum_{k=1}^K\frac{\E\regret_T(\hat\u^k)}{DKT} + \sum_{k=1}^K\sum_{t=1}^T \frac{ \E\langle \nabla\hat F_\delta(\w_t^k)-\tilde\g_t^k, \Delta_t^k-\hat\u^k\rangle }{DKT}.
\end{equation*}
Finally, by Corollary \ref{cor:smooth-reduction}, $\E\|F(\overline\w)\|_{2\delta}$ is also bounded by RHS, which concludes the proof.
\end{proof}

\VarianceLemma*

\begin{proof}
First, we recall a martingale concentration bound. Let $\v_1,\ldots,\v_n$ be a sequence of random vectors and let $\calF_i=\sigma(\v_{1:i})$ be the $\sigma$-algebra generated by $\v_{1:i}$. If $\E[\v_i|\calF_{i-1}] = 0$, then for all cross terms $i\le j$, $v_i\in\calF_{j-1}$ and thus $\E\langle \v_i, \v_j\rangle = \E_{\calF_{j-1}}\langle \v_i, \E[\v_j|\calF_{j-1}]\rangle = 0$. Consequently,
\begin{equation*}
    \textstyle
    \E\|\sum_{i=1}^n \v_i\|^2 = \sum_{i=1}^n \E\|\v_i\|^2.
\end{equation*}
In our case, note that $\nabla\hat F(\w_t^k) = \nabla\hat F(\w_1^k) + \sum_{i=2}^t \nabla\hat F(\w_t^k) - \nabla\hat F(\w_{t-1}^k)$ and recall that $\g_t^k = \g_1^k + \sum_{i=2}^t \d_i$. Therefore, by the concentration inequality and Lemma \ref{lem:grad-estimator} and \ref{lem:diff-estimator}, 
\begin{align*}
    \E\| \nabla \hat F(\w_t^k) - \g_t^k \|^2
    &\le \E\|\nabla \hat F(\w_1^k) - \g_1^k\|^2 + \sum_{i=2}^t \E\|\nabla\hat F(\w_i^k)-\nabla\hat F(\w_{i-1}^k) - \d_i^k\|^2 \\
    &\le \frac{16dL^2}{B_1} + \sum_{i=2}^t \frac{16dL^2}{B_2\delta^2} \E\|\w_i^k-\w_{i-1}^k\|^2.
\end{align*}
We conclude the proof by Remark \ref{rmk:w-distance} that $\|\w_i^k-\w_{i-1}^k\| \le 2D \le \frac{2\delta}{T}$.
\end{proof}
\section{Comparison with naive approach}
\label{app:comparison}

\begin{algorithm}[t]
\caption{Naive private gradient oracle}
\label{alg:oracle-standard}
\begin{algorithmic}[1]
    \Statex \textbf{Input:} dataset $\calZ$, constant $B$, noise $\sigma$
    \Statex \textbf{Initialize:} Partition $\calZ$ into $KT$ subsets $Z_t^k$ of size $B$. The data size is $|\calZ|=M=BKT$.
    \State Upon receiving round index $k,t$ and parameter $\w_t^k$:
    \State Sample $\u_1,\ldots,\u_B\sim\mathrm{Uniform}(\SS)$ i.i.d.
    \State $\g_t^k \gets \frac{1}{B}\sum_{i=1}^B \frac{d}{2\delta}(f(\w_t^k+\delta\u_i,z_i) - f(\w_t^k-\delta\u_i,z_i))\u_i$ where $z_i$ is the $i$-th data in $Z_t^k$.
    \State Return $\tilde\g_t^k \gets \g_t^k + \xi_t^k$ where $\xi_t^k \sim \calN(0,\sigma^2I)$ i.i.d.
\end{algorithmic}
\end{algorithm}

To concrete the discussion from Section \ref{sec:algorithm}, we'll delve into the analysis of the naive private algorithm built on O2NC. Specifically, this algorithm replaces the gradient oracle (as seen in line 5 of Algorithm \ref{alg:PONC}) with a direct zeroth-order estimator, as defined in Algorithm \ref{alg:oracle-standard}. As a remark, $\g_t^k$ is an unbiased estimator and $\E\|\g_t^k\|^2 \lesssim L^2(\frac{d}{B}+1)$, according to \citep{kornowski2023algorithm} Remark 6. Moreover, following the same argument as the sensitivity bound in Lemma \ref{lem:grad-estimator}, the sensitivity of $\g_t^k$ is bounded by $O(dL/B)$. Consequently, if we set $\sigma=dL/B\rho$, each $\tilde\g_t^k$ achieves $(\alpha,\alpha\rho^2/2)$-RDP. 

Although the algorithm appears more straightforward, its convergence analysis is less favorable. Following the proof idea of Theorem \ref{thm:convergence}, we first recall the result in Corollary \ref{cor:O2NC-smoothing}:
\begin{equation*}
    \E\|\nabla F(\overline \w)\|_{2\delta} 
    \le \frac{F^*+2L\delta}{DKT} + \sum_{k=1}^K\frac{\E\regret_T(\hat\u^k)}{DKT} + \sum_{k=1}^K\sum_{t=1}^T \frac{ \E\langle \nabla\hat F_\delta(\w_t^k)-\tilde\g_t^k, \Delta_t^k-\hat\u^k\rangle }{DKT}.
\end{equation*}
Recall that $\E[\regret_T(\hat\u^k)] \le D\sqrt{\sum_{t=1}^T \E\|\tilde\g_t^k\|^2}$. Therefore, we can bound the regret of OSD by
\begin{align*}
    \E[\regret_T(\hat\u^k)]
    &\le \textstyle D\sqrt{\sum_{t=1}^T \E\|\g_t^k\|^2 + \E\|\xi_t^k\|^2} \\
    &\lesssim \textstyle D\sqrt{\sum_{t=1}^T L^2(\frac{d}{B}+1) + \frac{d^3L^2}{B^2\rho^2}} \\
    &\lesssim \textstyle DL\sqrt{T} \left( \frac{\sqrt{d}}{\sqrt{B}} + 1 + \frac{d^{3/2}}{B\rho} \right).
\end{align*}
Next, we bound the sum of inner product. It's important to note that $\E\langle \nabla\hat F_\delta(\w_t^k)-\tilde\g_t^k, \Delta_t^k \rangle = 0$ because $\Delta_t^k$ is independent of $Z_t^k$. Therefore, using the martingale concentration bound, we have
\begin{align*}
    \textstyle
    \sum_{t=1}^T \E\langle \nabla\hat F_\delta(\w_t^k)-\tilde\g_t^k, \Delta_t^k-\hat\u^k \rangle 
    &= \textstyle
    \E\langle \sum_{t=1}^T \nabla\hat F_\delta(\w_t^k)-\tilde\g_t^k, -\hat\u^k \rangle \\
    &\le \textstyle 
    D\sqrt{\sum_{t=1}^T \E\|\nabla\hat F_\delta(\w_t^k)-\g_t^k\|^2 + \E\|\xi_t^k\|^2} \\
    &\lesssim \textstyle 
    DL\sqrt{T} \left( \frac{\sqrt{d}}{\sqrt{B}} + 1 + \frac{d^{3/2}}{B\rho} \right).
\end{align*}
Upon substituting back into Corollary \ref{cor:O2NC-smoothing}, we have
\begin{align*}
    \E\|\nabla F(\overline \w)\|_{2\delta} 
    &\lesssim
    \frac{F^*+L\delta}{DKT} + \frac{L}{\sqrt{T}}\left(\frac{\sqrt{d}}{\sqrt{B}}+1+\frac{d^{3/2}}{B\rho}\right)
    \intertext{Upon replacing $D=\delta/T$ and $M=KBT$, we have}
    &\lesssim 
    \frac{(F^*+L\delta)BT}{\delta M} + \frac{\sqrt{d}L}{\sqrt{BT}} + \frac{L}{\sqrt{T}} + \frac{d^{3/2}L}{B\sqrt{T}\rho}
    \intertext{To achieve the optimal non-private rate $O(d\delta^{-1}\epsilon^{-3})$, we need to set $B=1$. Therefore, upon setting $T=\min\{(\frac{\sqrt{d}L\delta M}{F^*+L\delta})^{2/3}, (\frac{d^{3/2}L\delta M}{(F^*+L\delta)\rho})^{2/3}\}$, we have}
    &\lesssim \left(\frac{(F^* + L\delta)dL^2}{\delta M}\right)^{1/3} + \left(\frac{(F^*+L\delta)d^{3/2}L^2}{\rho\delta M}\right)^{1/3}.
\end{align*}
Equivalently, $\E\|\nabla F(\overline\w)\|_{2\delta}\le \epsilon$ if $M=\Omega\left( d(\frac{F^*L^2}{\delta\epsilon^3}+\frac{L^3}{\epsilon^3}) + d^{3/2}(\frac{F^*L^2}{\rho\delta\epsilon^3} + \frac{L^3}{\rho\epsilon^3}) \right)$, whose dominating term is $\Omega(d^{3/2}\rho^{-1}\delta^{-1}\epsilon^{-3})$. In contrast, our carefully designed algorithm only requires $\tilde\Omega(d\delta^{-1}\epsilon^{-3}+d^{3/2}\rho^{-1}\delta^{-1}\epsilon^{-2})$ samples, as proved in Theorem \ref{thm:convergence}. Notably, the naive algorithm suffers an additional cost of $\epsilon^{-1}$ for privacy. 

\end{document}